\newcommand{\ii}{{\mathrm{i}}}
\newcommand{\R}{\mathbb{R}}
\newcommand{\N}{\mathbb{N}}
\newcommand{\T}{\mathbb{T}}
\newcommand{\bdone}{{\boldsymbol{1}}}
\newcommand{\lb}{\label}
\newcommand{\ol}{\overline}
\newcommand{\tr}{\text{\rm{Tr}}}
\newcommand{\coeff}{\text{\rm{coeff}}}
\newcommand{\meas}{\text{\rm{meas}}}
\newcommand{\beq}{\begin{equation}}
\newcommand{\eeq}{\end{equation}}
\newcommand{\ba}{\begin{align}}
\newcommand{\ea}{\end{align}}
\newcommand{\SC}{\operatorname{SC}}
\newcommand{\MP}{\operatorname{MP}}
\DeclareMathOperator{\GW}{GW}
\def \ud{{\tt u}}
\def \w{{\tt w}}
\def \g{{\tt g}}
\def \a{{\mathfrak a}}
\def \sur#1#2{\mathrel{\mathop{\kern 0pt#1}\limits^{#2}}}
\def \z{{\mathfrak z}}
\newcommand{\al}{\mathfrak \alpha}
\newcommand{\sn}{^{(n)}}
\newcommand{\Sr}{\mathcal{S}}
\newcommand{\ap}{\alpha^+}
\newcommand{\am}{\alpha^-}
\newcommand{\abold}{\boldsymbol{\alpha}}
\newcommand{\muun}{\mu^{(n)}_\ud}
\newcommand{\mbold}{\boldsymbol{m}}
\newcommand{\mubold}{\boldsymbol{\mu}}
\newcommand{\Tbold}{\boldsymbol{\Theta}}
\newcommand{\Rbold}{\boldsymbol{R}}
\newcommand{\hbold}{\boldsymbol{h}}
\definecolor{Red}{rgb}{1,0,0}
\definecolor{Blue}{rgb}{0,0,1}
\newcounter{smalllist}
\newcommand{\CUE}{\operatorname{CUE}}
\newcommand{\GUE}{\operatorname{GUE}}
\newcommand{\LUE}{\operatorname{LUE}}
\DeclareMathOperator\sign{sgn}
\DeclareMathOperator\jac{Jac}
\DeclareMathOperator\diag{Diag}
\numberwithin{equation}{section}
\newtheorem{theorem}{Theorem}[section]
\newtheorem{lemma}[theorem]{Lemma}
\newtheorem{corollary}[theorem]{Corollary}
\theoremstyle{definition}
\theoremstyle{remark}
\newtheorem{remark}{Remark}
\newtheorem*{remarks}{Remarks}
\newcommand{\abs}[1]{\lvert#1\rvert}
\begin{document}
\title[Spectral measures of  spiked matrices]
{Large deviations for spectral measures of some spiked matrices}
\author[N.~Noiry and A.~Rouault]
{Nathan Noiry$^{1}$ and
Alain Rouault$^2$}

\thanks{$^1$ Telecom Paris, 91120 Palaiseau France, e-mail: noirynathan@gmail.com}

\thanks{$^2$ Laboratoire de Math{\' e}matiques de Versailles, UVSQ, CNRS, Universit\'e Paris-Saclay, 78035-Versailles Cedex France, e-mail: alain.rouault@uvsq.fr}

\date{\today}

\begin{abstract} 
We prove large deviations principles for spectral measures of perturbed (or spiked) matrix models in the direction of an eigenvector of the perturbation. In each model under study, we provide two approaches, one of which relying on large deviations principle of unperturbed models derived in the previous work "Sum rules via large deviations" (Gamboa-Nagel-Rouault, {\it JFA} \cite{GaNaRo} 2016). 
\end{abstract}

\keywords{Large deviations, Sum rules, Jacobi coefficients, Verblunsky coefficients,  Matrix measures,  Relative entropy}
\subjclass[2010]{60F10, 60G57, 60B20, 47B36}
\maketitle

\section{Introduction} \lb{s1}

Beside the empirical spectral distribution of a $n\times n$ random matrix $M_n$ 
\[\mu_\ud\sn = \frac{1}{n} \sum_{k=1}^n \delta_{ \lambda_k}, \]
whose asymptotical behavior is widely known for numerous matrix models, there has been a growing interest in the study of the so-called spectral measures. For any fixed unit vector $e\sn \in \mathbb C^n$, the spectral measure associated to the pair $(M_n,e\sn)$ is
the probability measure $\mu_\w\sn$ defined by
\begin{align*}
\langle e\sn , (M_n -z)^{-1} e\sn\rangle = \int_{\mathbb R} \frac{d\mu_\w\sn (x)}{x-z} \ \ \hbox{for all} \  z \in \mathbb C \setminus \mathbb R
\end{align*}
if $M_n$ is Hermitian or  
\begin{align*} 
\large\langle e\sn , \frac{M_n + z}{M_n -z} e\sn\large\rangle = \int_0^{2\pi} \frac{e^{\ii \theta} +z}{e^{\ii \theta} -z}  d\mu_\w\sn (\theta) \ \  \hbox{for all} \ z : |z|\not= 1,
\end{align*}
if $M_n$ is unitary. 
In turns out that the spectral measure is a weighted version of the empirical spectral distribution:
\[\mu_\w\sn =  \sum_{k=1}^n \w_k \delta_{ \lambda_k}, \]
where $\w_k = |\langle \phi_k , e\sn\rangle|^2$, with $\phi_k$ a unit eigenvector associated to the eigenvalue $\lambda_k$. It was studied under the name {\it eigenvector empirical spectral distribution} in \cite{XYY}, in the context of unperturbed random covariance matrices.

In a series of papers \cite{gamboa2011large,GaNaRo,GNROPUC,GaNaRomat,GNRis} Gamboa et al.  studied the random spectral measure $\mu_\w\sn$ of a pair $(M_n, e\sn)$ where $M_n$ is a random $n \times n$ matrix self-adjoint or unitary, whose distribution is invariant by conjugation, and $e\sn$ is a fixed vector of $\mathbb C^n$. When the Radon-Nikodym density of this distribution is of the form $\exp -n^2 \tr\!\ V(M_n)$ and 
 with convenient assumptions on the potential $V$, the authors proved that the family $(\mu_\w\sn)_{n \geq 1}$ satisfies a large deviations principle at scale $n$ with a good rate function consisting of two parts. The first part is the  Kullback entropy of the equilibrium measure $\mu_V$ with respect to the absolute continuous part of the argument measure. The second part corresponds to the contribution of the outliers of the argument measure, namely of the eigenvalues that belong to the complement of the support of $\mu_V$. Besides, when the spectral measure is encoded by the Jacobi recursion coefficients (or the Verblunsky  coefficients in the unitary case), the rate function admits another expression in term of these coefficients, which is a simple functional in most of the classical cases. The  identification of the two expressions of the rate functions leads to the so called {\it sum rules}.
 
The simplest Hermitian invariant models are the well known Gaussian Unitary Ensemble $\GUE (n)$ and Laguerre Unitary Ensemble $\LUE_{n\tau}(n)$, whose equilibrium measures are respectively given by the semi-circle law ($\SC$) and the Marchenko-Pastur law ($\MP_\tau$). In the unitary world, the simplest model is of course the $\CUE(n)$ which corresponds to the Haar measure on the unitary group. The first non-trivial models are provided by the Gross-Witten measures 
$\mathbb G\mathbb W_\g(n)$ which form a family of probability measures on the unitary group, absolutely continuous with respect to the $\CUE(n)$, parametrized by a real number $\g$.

In this paper, we are interested in the large deviations of the spectral measures of rank-one perturbations of the classical aforementioned models. More precisely, we will consider additive perturbations of the $\GUE (n)$, multiplicative perturbations of the $\LUE_{n\tau}(n)$ and multiplicative perturbation of the Gross-Witten measures.  

The first model of spiked random matrices was proposed by Johnstone \cite{Johnstone}, who was motivated by several statistical reasons. Among others, the largest eigenvalues (and their associated eigenvectors) of the variance-covariance matrix of some data points is at the basis of the so-called Principal Component Analysis. With the current ability to collect and store massive databases, the practitioner is often faced with a number of observations ($n$) of the same order as their dimension ($p$), which makes the study of large random matrices relevant, at least to understand the mechanisms underlying the behavior of the spectrum. This initial observation of Johnstone has led Baik, Ben Arous and P\'ech\'e to find their famous phase transition \cite{BBPphase}. Since then, a tremendous amount of work has been conducted on spiked models. we refer the reader to \cite{DMC} for a survey of the afferent literature.

Let us mention that, at the level of large deviations, the extreme eigenvalues have been studied in \cite{BGGM}, and the pair (extreme eigenvalue, weight) has been recently considered in \cite{biroli}. In the present work, we establish large deviations principles for the sequences of spectral measures associated to the pairs $(M_n,e^{(n)})$, in case where the reference vector $e^{(n)}$ is colinear to the eigenvector of the perturbation. The corresponding good rate functions are simple perturbations of the good rate functions of the undeformed models and we refer the reader to Theorems \ref{theo:perturbGUE}, \ref{corLUE} and \ref{corGW} for precise statements. 

In order to derive these large deviations principles, we propose two approaches, each based on the already known LDP for classical models, and shedding different lights on the problem. The first one uses that the distributions of the spectral measures of the deformed models are tilted versions of the distributions of the spectral measures of the undeformed ones. The second approach relies on the computations of the Jacobi (resp. Verblunsky) parameters of the deformed models.

Of course, the unique minimizers of the rate functions corresponds to the limiting spectral measures of the considered models. In particular, we recover the expressions of the limiting spectral measures associated to the perturbations of the $\GUE(n)$ and the $\LUE_{n \tau}(n)$, which belong to the class of {\it free Meixner laws}. In the Gaussian setting, this was first observed in \cite{Lenczewski}. In the general case, this is a consequence of the local laws \cite{lee2015edge,knowles-yin}, as observed in \cite{noiry2019spectral}. For related papers on finite rank perturbations, see \cite{Kozhan} and \cite{Webb}; on Meixner class see \cite{Bryc}.

A byproduct of our considerations also yields a characterization of the limiting measures as the unique minimizers of the rate functions of the unperturbed models, under a constraint on the mean.

In a last part, we propose two generalizations.
The first one is concerned with perturbations of general invariant models, while the second one deals with matricial versions of the spectral measures.

In all the sum rules considered, the Kullback-Leibler divergence or relative entropy between two probability measures  
$\mu$ and $\nu$ plays a major role. When the probability space is $\mathbb R$ endowed with its Borel  $\sigma$-field,  
it  is defined by
\begin{equation}
\label{KL}
{\mathcal K}(\mu\, |\ \nu)= \begin{cases}  \ \displaystyle\int_{\mathbb R}\log\frac{d\mu}{d\nu}\!\ d\mu\;\;& \mbox{if}\ \mu\ \hbox{is absolutely continuous with respect to}\ \nu ,\\
   \  \infty  &  \mbox{otherwise.}
\end{cases}
\end{equation}
Usually, $\nu$ is the reference measure. Here  the spectral side  will involve the reversed Kullback-Leibler divergence, where $\mu$ is the reference measure and $\nu$ is the argument. 

The outline of the paper is as follows. In Section \ref{sec:notation}, we present our three random models and the main notations. Section \ref{sec:recapOPRLOPUC} gives the encoding of the spectral measures by Jacobi parameters in the real case and Verblunsky parameters in the complex case. In Section \ref{sec:recapsumrules}, we recall the results obtained by the second author of this paper with Gamboa and Nagel about large deviations and 
sum rules. Section \ref{sec:LDPperturb} contains our results, which are stated in Theorems \ref{theo:perturbGUE}, \ref{corLUE} and \ref{corGW}. In Section \ref{sec:generalization}, we present some generalizations in Theorems \ref{6.3}, \ref{6.4} and \ref{6.5}. 
 Finally, in an appendix we present a technical lemma and a short panorama of measures found in the different limits, which simplifies some computations along the paper.

\section{Notations} \label{sec:notation}

In this article, we are going to consider perturbed versions of three classical models of random matrices whose definitions are recalled here. The two first models have real eigenvalues and correspond to the Hermite and the Laguerre ensembles. The third model will have its eigenvalues on $\mathbb{T} :=\{ z \in \mathbb{C}, \, |z|=1\}$, and corresponds to the so-called Gross-Witten measure, which is absolutely continuous with respect to the Haar measure on $\mathbb U (n)$.  We denote by $\mathcal M_1(\mathbb R)$ (resp. $\mathcal M_1(\mathbb T)$) the set of probabilty measures on $\mathbb R$ (resp. $\mathbb T$).
\medskip
\paragraph*{{\bf The Hermite ensemble.}}

For all $n \geq 1$, the Gaussian Unitary Ensemble $\GUE(n)$, or Hermite ensemble, is a probability distribution on Hermitian matrices of size $n \times n$, whose density is proportional to  $\exp \left( -\frac{1}{2} \tr (X_nX_n^\star) \right)$ with respect to the Lebesgue measure $dX_n$. The rescaled matrix $H_n = \frac{1}{\sqrt{n}}X_n$ has law:
\begin{align}
\lb{defGUE}
\mathbb P_0\sn (dH) := \frac{1}{\mathcal Z_n} \exp \left( -\frac{n}{2} \tr (HH^\star) \right)  dH\,,
\end{align}
where $\mathcal Z_n$ is the normalization constant.\footnote{All the normalization constants will be denoted by the same symbol, without possible confusion since the different models are treated separately.}

The equilibrium measure of this ensemble, i.e. the limit of the empirical spectral distribution $\mu_\ud\sn$ is the semicircle distribution :
\begin{equation}
\label{SC0}
\operatorname{SC}(dx) = \frac{1}{2\pi}\sqrt{4-x^2}\!\ \mathbbm{1}_{[-2, 2]}(x)\!\ dx.
\end{equation} 
\medskip
\paragraph*{{\bf The Laguerre ensemble.}}
For all $n \geq 1$, let $N=N(n)$ be such that $n \leq N$. Let $X_n$ be a $n \times N$ complex matrix with i.i.d. Gaussian entries  whose real and imaginary parts are i.i.d. $\mathcal N(0; 1/2)$.
 Then, the Laguerre Unitary Ensemble $\mathrm{LUE}_N(n)$ is the distribution of $X_nX_n^\star$, whose density  is proportional to $(\det XX^\star)^{N-n} \exp \left( -  \tr\!\ XX^\star \right)$. The law of the rescaled matrix $L_n = \frac{1}{N} X_nX_n^\star$ is therefore given by
\begin{align}
\lb{defLUE}
\mathbb Q_1\sn (dL) := \frac{1}{\mathcal Z_n} 
(\det L)^{N-n} \exp \left( - N  \tr\!\ L \right) dL\,. 
\end{align}
All along this article, we will assume that  $N/n \rightarrow \tau^{-1} > 1$ as $n \rightarrow +\infty$. The equilibrium measure of this Laguerre ensemble, 
 is the  Marchenko-Pastur distribution with parameter $\tau$:
\begin{align}
\MP_\tau(dx) = \frac{\sqrt{(\tau^+ -x)(x-\tau^-)}}
{2\pi \tau x} \ \mathbbm{1}_{(\tau^-, \tau^+)} (x) dx
\label{MP0} 
\end{align}
where $\tau^\pm := (1\pm \sqrt\tau)^2$.
\medskip
\paragraph{{\bf The Gross-Witten ensemble.}}
Our third model has its eigenvalues on $\mathbb T$ and corresponds to the Gross-Witten measure $\mathbb G\mathbb W_{\g}(n)$ with parameter $\g \in \mathbb R$. It is a probability measure on the unitary group $\mathbb{U}(n)$ given by
\begin{equation}
\label{GW0}
\mathbb R_0\sn(dU) =  \frac{1}{\mathcal Z_n} \exp \left[\frac{n\g}{2}\tr\!\ (U + U^\star)\right] dU
\end{equation}
where $dU$ is the Haar probability measure on $\mathbb U(n)$.
Let us mention that the Gross-Witten measure arises in the context of the Ulam's problem which concerns the length of the longest increasing subsequence inside a uniform permutation \cite{DJ}.
For other details and applications of this distribution we refer to \cite{HiaiP} p. 203, \cite{GrossW}, \cite{Wadia}. 

There are two different behaviors according to the value of the parameter $\g$.

For  $|\g| \leq 1$ (ungapped or strongly coupled phase). In this context, the equilibrium measure $\GW_\g$ is supported on $\mathbb T$ and has the following density:
\begin{equation}
\label{GW-}
\GW_\g(dz)
= \frac{1}{2\pi} (1 + \g \cos \theta)\!\ d\theta,\; (z = e^{\ii \theta } , \theta \in [-\pi, \pi)).
\end{equation}
Note that $ \GW_\g$ has only nontrivial moments of order  $\pm 1$.

For $|\g| > 1$, the equilibrium measure is supported by an arc. This case will  not be considered here since the paper would be lenghtened with involved computations.

\section{Recap on Orthogonal polynomials} \label{sec:recapOPRLOPUC}
In this section we recall the possible parametrization of positive measures on $\mathbb{R}$ (resp. $\mathbb T$) by their Jacobi (resp. Verblunsky) coefficients. The latter appear through the spectral theory of orthogonal polynomials on the real line (OPRL), resp. the spectral theory of orthogonal polynomials on the unit circle (OPUC), which we briefly recall here. In the next section, we will use these parametrizations in order to recall the large deviations principles satisfied by the spectral measures of the models defined in Section \ref{sec:notation}.

\subsection{OPRL}
Let $\rho$ be a positive measure on $\mathbb R$ whose support is bounded but not made of a finite union of points. Let $(p_n(x))_{n \geq 0}$ be the sequence of orthonormal polynomials associated to $\rho$, obtained by applying the Gram-Schmidt algorithm to the basis $\{1,x,x^2, \ldots\}$. Then, there exists two sequences of uniformly bounded real numbers $(a_n)_{n \geq 0}$ and $(b_n)_{n \geq 0}$ such that $a_n>0$ for all $n \geq 0$ and such that the polynomials $p_n(x)$'s satisfy the following three terms recursion:
\begin{equation} \lb{1.1}
xp_n(x) =a_{n+1} p_{n+1}(x) + b_{n+1} p_n(x) + a_n p_{n-1}(x).
\end{equation}
The parameters $\{a_n,b_n\}_{n=1}^\infty$ are called the {\it Jacobi parameters} associated to $\rho$. We will denote
\begin{align}
\label{Jacarray}
\jac(\rho) = \begin{pmatrix} b_1,&b_2,&\cdots\\
a_1,&a_2,&\cdots
\end{pmatrix}\,.
\end{align}
As it is well known (see, e.g., \cite[Section~1.3]{simon05}), Equation \eqref{1.1} sets up the one-to-one correspondence between uniformly bounded sequences $(a_n)_{n \geq 1}$, $(b_n)_{n \geq 1}$ and positive measures $\rho$ on $\mathbb R$ whose supports are bounded but not made of a finite union of points. Moreover, a similar argument implies that there exists a one-to-one correspondence between the set of positive measures $\rho$ on $\mathbb R$ whose support are finite union of $N$ distinct points and the set of sequences $(a_n)_{1 \leq n \leq N-1}$ and $(b_n)_{1 \leq n \leq N}$ such that $a_n > 0$ for all $1 \leq n \leq N$. Let us mention that the Jacobi parameters of the semicircle law are given by:
\begin{align}
\label{JacSC}
\jac(\SC) = \begin{pmatrix} 0,&0,&\cdots\\
1,&1,&\cdots
\end{pmatrix}\,,
\end{align}
it is called the ``free" case in the OPRL literature.

When $\rho$ is supported on $[0,\infty)$ the recursion coefficients can be decomposed as
\begin{align} \label{zerl1}
\begin{split}
b_k =& z_{2k-2} + z_{2k-1}, \\
a_k^2 =&  z_{2k-1}z_{2k},
\end{split}
\end{align}
for $k \geq 1$, where $z_k\geq 0$ and $z_{0}=0$. In fact, by Favard's Theorem a measure $\rho$ is supported on $[0,\infty)$ if and only if its Jacobi coefficients satisfy the decomposition \eqref{zerl1}. In particular, the $\MP_\tau$ distribution corresponds to
$z^{\MP}_{2n-1}=1$ and 
$z^{\MP}_{2n} = \tau$ for all $n \geq 1$, so that
\begin{align}
\label{JacMP}
\jac(\MP_\tau) = \begin{pmatrix} 1,&1+\tau,&1+\tau,& \cdots\\
\sqrt\tau,&\sqrt\tau,&\sqrt\tau,&  \cdots
\end{pmatrix}\,.
\end{align}

Let us finally mention that the measure $\rho$ can be realized as the spectral measure associated to the pair $(J,e_1)$, where $J$ is the so-called Jacobi matrix which represents the multiplication by $x$ in the basis $(p_n(x))_{n \geq 0}$ of $L^2(\rho)$:
\begin{equation} \lb{1.3a}
J := \begin{pmatrix}
b_1 & a_1 & 0 & \vphantom{\ddots} \\
a_1 & b_2 & a_2 & \ddots  \\
0 & a_2 & \ddots & \ddots   \\
{} & \ddots & \ddots & \ddots
\end{pmatrix}.
\end{equation}

\subsection{OPUC}
Let $\mu$ be a probability measure on $\mathbb T$ whose support is not a finite set of points. Let $(\varphi_n(z))_{n \geq 0}$ be the sequence of orthonormal polynomials associated to $\mu$, obtained by applying the Gram-Schmidt algorithm to the basis $\{1,z,z^2, \ldots\}$. Then, there exists a sequence of complex numbers $(\alpha_n)_{n \geq 0}$, called the {\it Verblunsky coefficients} associated to $\mu$, such that $|\alpha_n| <1$ for all $n \geq 0$ and such that the polynomials $\varphi_n(z)$'s satisfy the following recursion:
\begin{equation} \lb{1.2}
z\varphi_n(z) = \rho_n \varphi_{n+1}(z) + \bar\alpha_n \varphi_n^*(z),
\end{equation}
where
\begin{equation} \lb{1.3}
\varphi_n^*(z) = z^n \, \ol{\varphi_n (1/\bar z)} \qquad
\rho_n = (1-\abs{\alpha_n}^2)^{1/2}\,.
\end{equation}
Equation \eqref{1.2} sets up a one-to-one correspondence between sequences $(\alpha_n)_{n \geq 0}$ with values inside $\{ z, \, |z| < 1\}$ and the set of positive measures $\mu$ on $\mathbb{T}$ whose supports are not finite union of points. Moreover, a similar argument implies that there exists a one-to-one correspondence between the set of positive measures $\mu$ on $\mathbb T$ whose support are finite union of $N$ distinct points and the set of sequences $(\alpha_n)_{1 \leq n \leq N}$ such that $|\alpha_n| < 1$ for all $0 \leq n \leq N-1$ and $|\alpha_N|=1$.

The sequence $\alpha_n\equiv 0$ corresponds to $\lambda_0$, the normalized Lebesgue measure on $\mathbb T$, and is called ``free" case in the OPUC literature.

For the Gross-Witten model, when $|\g|\leq 1$, 
the V-coefficients 
are given by (see \cite{simon05}, p. 86):
\begin{equation}
\label{alphalimGW}
\alpha_n^{\GW} = \begin{cases}\displaystyle  -\frac{x_+ - x_-}{x_+^{n+2} - x_-^{n+2}} & \hbox{if} \ |\g| < 1\\
\displaystyle \frac{(-\g)^{n+1}}{n+2}& \hbox{if} \ |\g| = 1\,,
\end{cases}
\end{equation}
where $x_\pm$  
 are roots of the equation 
\[x + \frac{1}{x} = - \frac{2}{\g}\,.\]
In particular
\begin{align}
\lb{alpha0}
\alpha_0^{\GW} = \frac{\g}{2}\,.
\end{align}

\section{Recap on LDP and sum rules}\label{sec:recapsumrules}

See  \cite{demboz98, deuschel2001large, agz} for background on LDP.  For the self-adjoint models the sequence $(\muun)$ satisfies the LDP at scale $n^2$ with good rate function involving the logarithmic entropy and the potential. Moreover the extremal eigenvalues satisfy the LDP at scale $n$ with a rate function $\mathcal F_H$ and $\mathcal F_L^\pm$, which represent effective potentials.  For the unitary model studied here, the support of the limiting measure is the whole unit circle and there exists no outlier. The following results are the sum rules obtained by the second author of this paper together with Gamboa and Nagel.
\newpage

\subsection{OPRL}
\subsubsection{LDP on the measure side}
To begin with, let us give some notations.
Let $\Sr = \Sr(\am,\ap)$ be the set of all bounded positive measures $\mu$ on $\R$ with 
\begin{itemize}
\item[(i)] $\operatorname{supp}(\mu) = J \cup \{E_i^-\}_{i=1}^{N^-} \cup \{E_i^+\}_{i=1}^{N^+}$, where $J\subset I= [\am,\ap]$, $N^-,N^+\in\N\cup\{\infty\}$ and 
\begin{align*}
E_1^-<E_2^-<\dots <\am \quad \text{and} \quad E_1^+>E_2^+>\dots >\ap .
\end{align*}
\item[(ii)] If $N^-$ (resp. $N^+$) is infinite, then $E_j^-$ converges towards $\am$ (resp. $E_j^+$ converges to $\ap$).
\end{itemize}
Such a measure $\mu$ will be written as
\begin{align}\label{muinS0}
\mu = \mu_{|I} +  \sum_{i=1}^{N^+} \gamma_i^+ \delta_{E_i^+} + \sum_{i=1}^{N^-} \gamma_i^- \delta_{E_i^-},.\
\end{align}
Further, we define $\Sr_1=\Sr_1(\am,\ap):=\{\mu \in \Sr |\, \mu(\R)=1\}$. We endow $\Sr_1$ with the weak topology and the corresponding Borel $\sigma$-algebra.

On the measure side we have
\begin{theorem}
The family of distributions of $(\mu_\w\sn)$ under  $\GUE (n)$ (resp. $\LUE_N(n)$)
 satisfies the LDP on $\mathcal M_1(\mathbb R)$ equipped with the weak topology in the scale $n$ with good rate function $\mathcal I^H_\meas$ (resp. $\mathcal I^L_\meas$) given by
\begin{equation}\label{eq:defnImeasGUE}
\mathcal I^H_\meas(\mu) = 
\begin{cases}
\mathcal K(\SC\, |\ \mu) + \sum_k \mathcal F_H(E_k^\pm) \ \ \hbox{if} \  \mu \in \Sr_1(-2,2), \\
\infty \ \ \hbox{otherwise},
\end{cases}
\end{equation}
where
\begin{equation}
\mathcal{F}_H(x) :=  \begin{cases} &  \displaystyle \int_2^{|x|} \sqrt{t^2-4}\!\ dt 
\;\;\;\;\mbox{if} \ |x| \geq 2\\
    &  \infty\;\;\mbox{ otherwise,}
\end{cases}
\end{equation}
resp.
\begin{equation}
\mathcal I^L_\meas(\mu) =
\begin{cases} 
\mathcal K(\MP_\tau\, |\ \mu) + \sum_k \mathcal F
^\pm_L(E_k^\pm)\ \ \ \hbox{if} \  \mu \in \Sr_1(\tau^-, \tau^+)\\
\infty \ \ \hbox{otherwise},
\end{cases}
\end{equation}
where
\begin{align}
\mathcal{F}_L^+(x) &=  \displaystyle\int_{\tau^+}^x \frac{\sqrt{(t -  \tau^-)(t - \tau^+)}}{t\tau}\!\ dt\;\;\;\;\mbox{if} \ x \geq \tau^+,\\
{\mathcal F}_L^-(x) &= \displaystyle\int_x^{\tau^-} \frac{\sqrt{(\tau^- -t)(\tau^+ -t)}}{t\tau}\!\ dt  \;\;\;\;\mbox{if} \ x \leq \tau^-,\\
\mathcal F^\pm_L (x) &= \infty \;\;\;\;\mbox{if} \ x \in  [\tau^-, \tau^+]\,.
\end{align}
The measure $\SC$ (resp. $\MP_\tau$) is the unique minimum of $\mathcal I^H_\meas$ (resp. $\mathcal I^L_\meas$).
\end{theorem}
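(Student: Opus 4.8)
The plan is to follow \cite{GaNaRo}: transport the statement to the Jacobi parameters, where the tridiagonal forms of the models make the law of $\mu_\w\sn$ explicit, prove the LDP there, and translate back by means of a sum rule. Since the laws \eqref{defGUE}, \eqref{defLUE} are invariant under unitary conjugation, one may take $e\sn = e_1$ and apply the tridiagonalisation procedure. In the Hermite case this produces the Dumitriu--Edelman model: $\mu_\w\sn$ is the spectral measure of a pair $(J_n, e_1)$ for a Jacobi matrix whose parameters $(b_k\sn)_{1\le k\le n}$, $(a_k\sn)_{1\le k\le n-1}$ are independent, with $b_k\sn \sim \mathcal N(0,1/n)$ and $2n\,(a_k\sn)^2 \sim \chi^2_{2(n-k)}$. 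In the Laguerre case, bidiagonalising $X_n$ shows that $\mu_\w\sn$ is the spectral measure of a Jacobi matrix whose parameters satisfy the decomposition \eqref{zerl1} with the $z_j\sn$ independent and Gamma distributed, with means $1$ and $\tau$ on the odd, resp.\ even, indices, cf.\ \eqref{JacMP}.

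Next I would establish, at scale $n$, an LDP for the array of parameters. For fixed $m$, Cram\'er's theorem applied to the explicit, independent first $m$ coordinates gives an LDP whose rate function is a finite sum of one-dimensional rate functions: in the Hermite case $b\mapsto b^2/2$ and $a\mapsto a^2 - 1 - \log a^2$, whose unique zeros are $0$ and $1$ (that is, $\jac(\SC)$), and in the Laguerre case analogous Gamma-type functions of the $z_j$, with zeros $1$ and $\tau$ (that is, $\jac(\MP_\tau)$). An exponential tail bound on the coordinates yields exponential tightness, and the Dawson--G\"artner theorem upgrades these finite-dimensional statements to an LDP for the whole array in the product topology, with rate function the (possibly infinite) series $\mathcal I_{\mathrm{coeff}}$ of these single-letter functions.

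I would then transfer this to the measure side through the bijection $\jac^{-1}$ between parameter arrays and probability measures, which is continuous from the product topology to $\mathcal M_1(\mathbb R)$ with the weak topology once one restricts to arrays converging to the free values ($a_k\to1$, $b_k\to0$, resp.\ $z_j\to1$ or $z_j\to\tau$), the complement carrying infinite rate anyway. The genuine subtlety, and the main obstacle, is that $\mu_\w\sn$ has only $n$ atoms and hence only finitely many parameters, so one must show that truncating the array at a level $m\ll n$ perturbs $\mu_\w\sn$ by an amount negligible on the exponential scale as $m\to\infty$, uniformly in $n$; combined with exponential tightness in $\mathcal M_1(\mathbb R)$ — which, the line being non-compact, requires a genuine estimate controlling the mass far from the bulk — this lets the contraction principle produce the LDP for $(\mu_\w\sn)$ with good rate function $\mu\mapsto \mathcal I_{\mathrm{coeff}}(\jac(\mu))$, finite exactly on $\Sr_1(-2,2)$, resp.\ $\Sr_1(\tau^-,\tau^+)$, which accounts for the support constraints in $\mathcal I^H_\meas$ and $\mathcal I^L_\meas$.

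Finally I would identify $\mathcal I_{\mathrm{coeff}}(\jac(\mu))$ with the asserted closed form via a sum rule: the semicircle sum rule of Killip--Simon type gives $\mathcal I_{\mathrm{coeff}}(\jac(\mu)) = \mathcal K(\SC\,|\,\mu) + \sum_k \mathcal F_H(E_k^\pm)$, and the Laguerre sum rule gives the analogue with $\MP_\tau$ and $\mathcal F_L^\pm$. Uniqueness of the minimiser is then immediate, either from the fact that $\mathcal I_{\mathrm{coeff}}$ vanishes only at the free array, or from $\mathcal K(\SC\,|\,\mu)\ge 0$ with equality iff $\mu=\SC$ together with $\mathcal F_H(E_k^\pm)\ge 0$ with equality iff there are no outliers. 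An alternative to the whole argument works directly from the joint law of the eigenvalues and the (uniform, Dirichlet-distributed) weights: the $\SC$, resp.\ $\MP_\tau$, concentration of the empirical eigenvalue measure produces the relative-entropy term and the scale-$n$ extreme-eigenvalue LDP produces the $\mathcal F$ term, but matching this with the closed form above still ultimately passes through the sum rule.
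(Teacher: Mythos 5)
The theorem you are asked to prove is stated in the paper as a recap and is not reproved there; it is attributed to \cite{GaNaRo} (and the underlying coefficient-side technology to \cite{gamboa2011large}). Your outline is close in spirit to those references, and the coefficient-side LDP you describe (Dumitriu--Edelman tridiagonalisation, Cram\'er for the independent entries, Dawson--G\"artner for the projective limit) is correct and is indeed the mechanism used in the cited work. However, your route of closing the argument has a genuine circularity problem in the Laguerre case. You propose to obtain the measure-side rate function by first proving the coefficient-side LDP and then invoking the Laguerre sum rule to rewrite $\mathcal I_{\mathrm{coeff}}(\jac(\mu))$ as $\mathcal K(\MP_\tau\,|\,\mu)+\sum_k\mathcal F_L^\pm(E_k^\pm)$. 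But, as the paper itself emphasises immediately after Theorem \ref{sumruleg}, the Laguerre sum rule (item (2) there) is itself \emph{only} known through the probabilistic method of \cite{GaNaRo}, which consists precisely in establishing the measure-side LDP \emph{independently} (from the joint density of eigenvalues and Dirichlet weights), establishing the coefficient-side LDP, and then deducing the sum rule by equating the two rate functions on the common domain. Using the Laguerre sum rule to derive the measure-side LDP therefore presupposes the very statement one is trying to prove. For the Hermite case the Killip--Simon sum rule is available by prior spectral-theoretic arguments, so your route is logically sound there; for Laguerre it is not.

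Your final paragraph actually contains the fix: the argument that works in both cases is the ``alternative'' you mention, namely the direct measure-side LDP built from the large-deviations behaviour of the empirical eigenvalue measure at scale $n^2$ (projective limit giving the reversed relative entropy term $\mathcal K(\SC\,|\,\mu)$, resp.\ $\mathcal K(\MP_\tau\,|\,\mu)$), the scale-$n$ LDP for the extreme eigenvalues (giving the $\mathcal F_H$, resp.\ $\mathcal F_L^\pm$, contributions from outliers), and the Dirichlet-distributed weights. Contrary to what you write, this direct route does \emph{not} pass through the sum rule at all -- it yields the closed-form rate function immediately -- and in fact it is what makes the sum rule a theorem rather than an assumption. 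You should also be aware that the contraction step you flag as ``the main obstacle'' (transporting the coefficient-side LDP to the measure side) is a real issue: the inverse Jacobi map is only continuous on sets where the coefficient arrays are uniformly bounded, and handling this requires either an inverse-contraction argument or exponential tightness on the measure side, both of which are supplied in \cite{GaNaRo} and \cite{gamboa2011large}. Acknowledging the difficulty is good, but a complete proof must actually resolve it.
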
 

Actually, we can strenghten the topology on the set of measures. We refer to  \cite{gamboa2011large} for more details.
\begin{corollary}
\label{cor42}
The above LDPs are in force  in the set $\mathcal M_1^d$ of probability measures with finite moments and determined by these moments, equipped with the topology $\mathcal T^m$ of convergence of moments.
\end{corollary}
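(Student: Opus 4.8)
The plan is to upgrade the large deviations principles of the preceding theorem from the weak topology on $\mathcal M_1(\mathbb R)$ to the stronger topology $\mathcal T^m$ of convergence of moments on the space $\mathcal M_1^d$ of measures determined by their moments, following the standard inverse-contraction strategy and imitating the argument of \cite{gamboa2011large}. Recall that a sequence of probability measures satisfying an LDP with good rate function in a weaker (Hausdorff) topology automatically satisfies the same LDP, with the same rate function, in a stronger topology provided one establishes exponential tightness at the given speed in the stronger topology. So the two things to check are: (a) that $\mathcal T^m$ is a well-defined Hausdorff topology on $\mathcal M_1^d$, finer than the weak one, and that $\mu_\w\sn$ almost surely lives in $\mathcal M_1^d$ (its moments are finite since, for $\GUE$, $\mu_\w\sn$ is supported in the spectrum of $H_n$, and likewise for $\LUE$; and every compactly supported measure is determined by its moments, by Weierstrass); and (b) exponential tightness at scale $n$ in the topology $\mathcal T^m$.

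For step (b), a base of neighborhoods of $\mu$ in $\mathcal T^m$ is given by sets of the form $\{\nu : |\int x^j\,d\nu - \int x^j\,d\mu| < \varepsilon,\ j \le J\}$, so exponential tightness in $\mathcal T^m$ reduces, via a diagonal/countable-intersection argument, to producing for each fixed $j$ and each $L$ a compact set $K$ in $\mathbb R$ and a constant such that $\mathbb P^{(n)}\big(\,|\int x^j\,d\mu_\w\sn| > R\,\big) \le e^{-nL}$ for $R$ large; equivalently, one needs an exponential bound on the $j$-th moment of the random spectral measure. First I would write $\int x^j\,d\mu_\w\sn = \langle e\sn, M_n^j\, e\sn\rangle$, which is bounded by $\|M_n\|^j$. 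The large deviations for the extreme eigenvalues recalled in Section \ref{sec:recapsumrules} — the rate functions $\mathcal F_H$ and $\mathcal F_L^\pm$, which are finite and grow to $+\infty$ — give exactly $\mathbb P^{(n)}(\|M_n\| > R) \le e^{-cnR^{\,a}}$ for $R$ large ($a=2$ in the Gaussian case, $a=1$ in the Laguerre case, up to constants), which dominates $e^{-nL}$ for any fixed $L$ once $R$ is large enough. Hence the superlevel sets of $\int x^j\,d\cdot$ can be cut off at an exponential cost in $n$, uniformly, and intersecting over $j$ produces the required compacts for $\mathcal T^m$.

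With weak-topology LDP plus $\mathcal T^m$-exponential tightness in hand, the inverse contraction principle (see \cite{demboz98, deuschel2001large}) yields the full LDP in $\mathcal M_1^d$ with topology $\mathcal T^m$, and since the rate function is already good and the identity map $(\mathcal M_1^d,\mathcal T^m)\to(\mathcal M_1(\mathbb R),\text{weak})$ is continuous, the rate function is unchanged: it is still $\mathcal I^H_\meas$, respectively $\mathcal I^L_\meas$, with the same unique minimizers $\SC$, respectively $\MP_\tau$. I expect the only real obstacle to be the bookkeeping in step (b): verifying that $\mathcal T^m$ genuinely has a countable neighborhood base well-behaved enough that exponential tightness "moment by moment" assembles into honest exponential tightness in $\mathcal T^m$, and checking that on $\mathcal M_1^d$ the moment topology is indeed Hausdorff so that the inverse contraction principle applies — both points are treated in \cite{gamboa2011large}, so I would cite that reference for the topological lemmas and concentrate the new work on the exponential moment bound via the edge LDP.
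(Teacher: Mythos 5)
Your proposal is correct and follows essentially the same route as the paper: restrict the weak LDP to $\mathcal M_1^d$, establish exponential tightness for the topology $\mathcal T^m$ by controlling $\|M_n\|$ via the edge-eigenvalue large deviations, and invoke the inverse contraction principle (Corollary 4.2.6 in Dembo--Zeitouni). The paper streamlines your moment-by-moment bookkeeping by observing directly that $\|M_n\|\le M$ places $\mu_\w\sn$ in the $\mathcal T^m$-compact set $\mathcal K_M=\{\mu:\mu([-M,M]^c)=0\}$, and it explicitly appeals to Lemma 4.1.5(b) of the same reference for the restriction to $\mathcal M_1^d$ (one needs not only that $\mu_\w\sn\in\mathcal M_1^d$ a.s.\ but also that the domain of the rate function is contained in $\mathcal M_1^d$).
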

\label{strongtopo}
\begin{proof}
For every $n$ we have $\mu_\w\sn \in \mathcal M_1^d$. Moreover the domain of the rate function is a subset of  $\mathcal M_1^d$. So by Lemma 4.1.5 (b) in \cite{demboz98}, the LDP is verified in  $\mathcal M_1^d$ with the weak convergence.
Now let, for every $M > 0$,
\[\mathcal K_M = \{\mu \in \mathcal M_1 : \mu([-M, M]^c = 0\}\,.\]
This set is compact for the  $\mathcal T^m$ topology.  
Moreover
\[\mathbb P( \mu_\w\sn \in \mathcal K_M^c) \leq \mathbb P( |\lambda\sn_{\max}| >M)\]
where $|\lambda\sn|_{\max} = \max |\lambda_k|, k = 1, \dots, n$. But this probability is bounded by $\exp -n h(M)$ with $h(M) \rightarrow \infty$ as $M\rightarrow\infty$. So, the family of distributions of $\mu_\w\sn$ is exponentially tight in $\mathcal M_1^d$ for the  $\mathcal T^m$ topology. By application of Corollary 4.2.6 in \cite{demboz98} we get the LDP for the  $\mathcal T^m$ topology.
\end{proof}

\subsubsection{Coefficients side - Sum rules}

We start by stating the classical Killip-Simon sum rule (due to  \cite{killip2003sum} and explained in \cite{Simon-newbook} p.37). 
It gives two different expressions for the discrepancy between a measure and  to the semicircle law $\operatorname{SC}$.

For a probability measure $\mu$ on $\R$ with recursion coefficients $\a := (a_k)_k,\, \mathfrak{b} :=(b_k)_k$, define 
\begin{align}\label{rateG1}
\mathcal{I}_{\coeff}^H(\a, \mathfrak{b}) := \sum_{k\geq 1}\left( \frac{1}{2} b_k^2 + G(a_k^2)\right)
\,,
\end{align}
where $G(x) = x-1 - \log x$.
It is a convex function of $(\a, \mathfrak{b})$ with values in $[0, \infty]$ which has a unique minimum at $a_k \equiv 1, b_k \equiv 0$, corresponding to the semicircle law $\SC$  (see \eqref{JacSC}).

If the support of $\mu$ is a subset of $[0, \infty)$ with $\z= (z_k)_k$, define 
\begin{align} \label{rateL1}
\mathcal{I}_\coeff^L(\z) :=   \sum_{k=1}^\infty\left( \tau^{-1}G(z_{2k-1}) +  G(\tau^{-1}z_{2k})\right)\,. 
\end{align}
It is a convex function of $\z$ with values in $[0, \infty]$ which has a unique minimum at $\z =(z_k)_{k \geq 0}$ 
with
\[z_0 = 0, z_{2k-1} = 1, z_{2k} = \tau \  (k \geq 1)\,.\]
which corresponds with $\MP_\tau$.

Then we have the following theorem.

\medskip

\begin{theorem}
\label{sumruleg}
\begin{enumerate}
\item \cite{killip2003sum}
Let $J$ be a Jacobi matrix with diagonal entries $b_k \in \mathbb R$ and subdiagonal entries $a_k > 0$ satisfying  $\sup_k a_k + \sup_k |b_k| < \infty$ and let $\mu$ be the associated spectral measure. Then $\mathcal{I}_{meas}^H(\mu)= \infty$ if $\mu \notin \Sr_1(-2,2)$ and for $\mu \in \Sr_1(-2,2)$, 
\begin{align}
\label{srH}
\mathcal I_\coeff^H(\a, \mathfrak{b}) = \mathcal I_\meas^H(\mu)
\end{align}
 where in  (\ref{srH}),  both sides may be infinite simultaneously. 
\item \cite{GaNaRo}
Assume the entries of the Jacobi matrix $J$ satisfy the decomposition 
\eqref{zerl1} with $\sup_k z_k <\infty$ and let $\mu$ be the spectral measure of $J$. Then for all $\tau \in (0,1]$, $\mathcal{I}_L(\mu)=\infty$ if $\mu \notin \Sr_1(\tau^-,\tau^+)$ and for  $\mu \in \Sr_1(\tau^-,\tau^+)$,
\begin{align}
\label{srL} 
\mathcal{I}_\coeff^L(\z) = \mathcal I_\meas^L(\mu) 
\end{align}
 where in (\ref{srL}), both sides may be infinite simultaneously. 
\end{enumerate}
\end{theorem}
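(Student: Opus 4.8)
\emph{Proof proposal.} I would deduce both identities from the large deviations principles recalled in Section~\ref{sec:recapsumrules}, by computing the very same rate functions a second time \emph{directly on the recursion coefficients} and then invoking uniqueness of the rate function of an LDP. This is the strategy of \cite{GaNaRo}, and for part~(1) it recovers the Killip--Simon sum rule without its original complex‑analytic machinery. The classical route (Case/Szegő step‑by‑step sum rules) remains available as an alternative for (1), and I will indicate it at the end.

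\emph{Coefficient‑side LDP.} Realize $\mu_\w\sn$ as the spectral measure of $(H_n,e_1)$ and conjugate $H_n$ by a Householder transformation fixing $e_1$; this turns $H_n$ into a real Jacobi matrix whose entries $(b_k\sn)_{1\le k\le n}$ and $(a_k\sn)_{1\le k\le n-1}$ are \emph{independent}, with $\sqrt n\,b_k\sn$ standard Gaussian and $n\,(a_k\sn)^2$ a mean‑one Gamma‑type variable built from $\sim n$ squared Gaussians (the Dumitriu--Edelman tridiagonal model at $\beta=2$). For $\LUE_N(n)$ the same reduction produces a Jacobi matrix in the form \eqref{zerl1} with the $z_j\sn$ independent and $Nz_{2k-1}\sn$, $Nz_{2k}\sn/\tau$ Gamma‑distributed of linearly growing shape and asymptotic mean $1$. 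A Cramér estimate together with a projective‑limit (Dawson--Gärtner) argument — Gaussian tails for the $b_k$, Gamma tails for the $a_k^2$, resp.\ for the $z_j$ — then shows that, with the topology of coordinatewise convergence on the coefficient arrays, $\jac(\mu_\w\sn)$ satisfies the LDP at scale $n$ with good rate function $\mathcal I^H_\coeff$ of \eqref{rateG1} (resp.\ $\mathcal I^L_\coeff$ of \eqref{rateL1}): $\tfrac12 b_k^2$ is the rate of a centered Gaussian and $G(a_k^2)=a_k^2-1-\log a_k^2$ (resp.\ $\tau^{-1}G(z_{2k-1})$, $G(\tau^{-1}z_{2k})$) the rate of the corresponding normalized mean‑one Gamma fluctuation.

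\emph{Transfer to the measure side.} By Section~\ref{sec:recapOPRLOPUC} the map $\jac^{-1}$ from uniformly bounded coefficient arrays to measures with infinite compact support on $\R$ (on $[0,\infty)$ in the Laguerre case, by Favard's theorem) is a bijection, and it is continuous for the moment topology $\mathcal T^m$ on the target, since each moment of $\mu$ is a fixed polynomial in finitely many $a_k,b_k$. Pushing the coefficient‑side LDP forward by $\jac^{-1}$ via the contraction principle ($\jac^{-1}$ being injective) yields an LDP for $(\mu_\w\sn)$ on $(\mathcal M_1^d,\mathcal T^m)$ with rate $\mu\mapsto\mathcal I^H_\coeff(\jac(\mu))$. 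Comparing with Corollary~\ref{cor42} and using uniqueness of the rate function forces $\mathcal I^H_\coeff(\a,\mathfrak{b})=\mathcal I^H_\meas(\mu)$ for every bounded Jacobi matrix $J$ with spectral measure $\mu$, which is \eqref{srH}; in particular both sides are infinite together, and they are infinite precisely when $\mu\notin\Sr_1(-2,2)$, where $\mathcal I^H_\meas=\infty$ by definition \eqref{eq:defnImeasGUE}. The identical argument with $\mathcal I^L_\coeff$, $\mathcal I^L_\meas$ and $\Sr_1(\tau^-,\tau^+)$ proves \eqref{srL}.

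\emph{Main obstacle.} The delicate point is making the contraction principle reversible: one needs exponential tightness of $\jac(\mu_\w\sn)$ in a topology fine enough for $\jac^{-1}$ to be continuous yet coarse enough for tightness to hold — handled exactly as in the proof of Corollary~\ref{cor42}, through exponential control of $\sup_k|b_k\sn|$ and $\sup_k a_k\sn$ — and, more seriously, one must verify that the outlying masses $\gamma_k^\pm\delta_{E_k^\pm}$, which lie outside $[-2,2]$ (resp.\ outside $[\tau^-,\tau^+]$) and are invisible to any finite set of moments, are genuinely transported by $\jac^{-1}$ together with the effective‑potential weights $\mathcal F_H(E_k^\pm)$ (resp.\ $\mathcal F_L^\pm(E_k^\pm)$). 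For part~(1) this is precisely what the classical proof supplies directly: iterating the coefficient‑stripping formula for the $m$‑function and summing the telescoping step‑by‑step sum rules produces the $C_0$ (Szegő) and $P_2$ sum rules, whose combination is \eqref{srH}, the eigenvalue terms entering through the zeros of the Jost function — and that computation can serve as a fully self‑contained proof of~(1).
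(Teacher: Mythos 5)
The paper does not prove Theorem~\ref{sumruleg}: it is stated as a recall, with part (1) cited to \cite{killip2003sum} and part (2) to \cite{GaNaRo}, and the paper explicitly notes afterwards that both follow from the probabilistic method of \cite{GaNaRo} (which, for (2), ``is up to now the only method''). Your proposal is a faithful sketch of exactly that method: Dumitriu--Edelman tridiagonalization turning $\mu_\w\sn$ into the spectral measure of a Jacobi matrix with independent Gaussian/Gamma entries, Cram\'er plus Dawson--G\"artner for the coefficient-side LDP with rate $\mathcal I^H_\coeff$ (resp.\ $\mathcal I^L_\coeff$), pushforward through $\jac^{-1}$ to $(\mathcal M_1^d,\mathcal T^m)$, and then uniqueness of the LDP rate function against the measure-side LDP of Corollary~\ref{cor42}. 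One small clarification on the ``main obstacle'' you raise: once the two LDPs are in force at the same speed on the same Hausdorff space, the rate functions agree pointwise as a matter of generality; there is no separate verification needed that the outlier masses are ``transported together with'' the effective-potential weights $\mathcal F_H(E_k^\pm)$ --- those terms appear automatically on the measure side because $\mathcal I^H_\meas$ is established directly from the joint law of eigenvalues and weights, and the outliers are far from invisible in $\mathcal T^m$ (the moments encode them). The real technical burden is, as you also say, the continuity/exponential-tightness bookkeeping making the contraction legitimate, and that is handled in \cite{GaNaRo,gamboa2011large} along the lines you indicate.
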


Note that if $\tau=1$, the support of the limit measure is $[0,4]$, so that we have a hard edge at 0 with $N^-=0$ and no contribution of outliers to the left.

The results (1) and (2) are obtained by probablistic method (\cite{GaNaRo}) and  up to now it is the only method to prove (2).

\subsection{OPUC}
For the unitary case we have LDPs on the measure side and some sum rules. In the following $\mathcal K (\nu\, |\ \mu)$ denotes the Kullback-Leibler divergence or relative entropy of $\nu$ with respect to $\mu$ on $\mathbb T$. Here, since $\mathbb T$ is a compact set, it is enough to equip  $\mathcal M_1 (\mathbb T)$  with the topology on $\mathcal M_1 (\mathbb T)$.

\subsubsection{Measure side}

\begin{theorem}[\cite{GNROPUC} Cor. 4.5] 
\label{appliGW} 
When  $|\g| \leq 1$, the family of distributions of  $(\mu_\w\sn)$ under $\mathbb G\mathbb W_\g(n)$
 satisfies the LDP  in $\mathcal M_1(\mathbb T)$ with speed $n$ and 
 rate function 
\[\mathcal I^{\GW}_{meas} (\mu) = \mathcal K(\GW_\g\, |\ \mu)\,.\] 
The measure $\GW_\g$ is the unique minimum of $\mathcal I^{GW}$.
\end{theorem}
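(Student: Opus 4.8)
The plan is to use the conjugation invariance of $\mathbb G\mathbb W_\g(n)$ to decouple the eigenphases of $U$ from the spectral weights, to freeze the eigenphases at $\GW_\g$ at the fast speed $n^2$, and to reduce the statement to a ``Dirichlet weighting'' large deviations lemma for the weights. Concretely: write a random $U$ with law $\mathbb G\mathbb W_\g(n)$ as $U=V\,\mathrm{diag}(e^{\ii\Theta_1},\dots,e^{\ii\Theta_n})\,V^\star$, where $V$ is the eigenvector frame. Since the density \eqref{GW0} depends on $U$ only through $\tr(U+U^\star)=2\sum_k\cos\Theta_k$, and Haar measure $dU$ factorises into a Weyl density in the $\Theta_k$ and the invariant measure on the flag manifold for $V$, under $\mathbb G\mathbb W_\g(n)$ the eigenphases $(\Theta_k)$ follow the circular Coulomb gas with density proportional to $\prod_{j<k}|e^{\ii\theta_j}-e^{\ii\theta_k}|^2\exp\!\big(n\g\sum_j\cos\theta_j\big)$, while the weight vector $(\w_1,\dots,\w_n)$, $\w_k=|\langle\phi_k,e\sn\rangle|^2$, is $\mathrm{Dirichlet}(1,\dots,1)$ and \emph{independent} of $(\Theta_k)$. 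Thus $\mu_\w\sn=\sum_{k=1}^n\w_k\,\delta_{e^{\ii\Theta_k}}$, and by the usual representation we may take $\w_k=\xi_k/(\xi_1+\dots+\xi_n)$ with $(\xi_k)$ i.i.d.\ $\mathrm{Exp}(1)$, independent of $(\Theta_k)$. Equivalently, $\mathbb G\mathbb W_\g(n)$ is the tilt of $\CUE(n)$ by $\mathcal Z_n^{-1}e^{(n\g/2)\tr(U+U^\star)}$; this factor is a function of the eigenphases only, so it changes the law of the $\Theta_k$ but leaves the conditional law of $\mu_\w\sn$ given $(\Theta_k)$ — the Dirichlet weighting — exactly as under $\CUE(n)$: the two viewpoints coincide.

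Next, the empirical phase measure $\mu_\ud\sn=\frac1n\sum_k\delta_{e^{\ii\Theta_k}}$ obeys the standard speed-$n^2$ LDP for the circular Coulomb gas with external field $\theta\mapsto-\g\cos\theta$, whose good (strictly convex) rate function has $\GW_\g$ of \eqref{GW-} as unique zero, this being precisely the equilibrium measure of that field; hence $\mathbb P\big(\dist(\mu_\ud\sn,\GW_\g)>\delta\big)\le e^{-c(\delta)n^2}$ for all $\delta>0$, negligible at speed $n$, and $\mu_\ud\sn\to\GW_\g$ almost surely. The core is then the following quenched statement: for \emph{any} deterministic array $(x_k^{(n)})_{1\le k\le n}$ in $\mathbb T$ with $\frac1n\sum_k\delta_{x_k^{(n)}}\to\GW_\g$ weakly, the measures $\sum_k\w_k\,\delta_{x_k^{(n)}}$ satisfy the speed-$n$ LDP on $\mathcal M_1(\mathbb T)$ with good rate function $\mu\mapsto\mathcal K(\GW_\g\,|\,\mu)$, \emph{not depending on the array}. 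To see this, first handle the unnormalised measures $\nu_n=\frac1n\sum_k\xi_k\,\delta_{x_k^{(n)}}$: for $\phi\in C(\mathbb T)$ with $\|\phi\|_\infty<1$ we have $\frac1n\log\mathbb E\,e^{n\langle\nu_n,\phi\rangle}=\frac1n\sum_k\log(1-\phi(x_k^{(n)}))^{-1}\to\int_{\mathbb T}\log(1-\phi)^{-1}\,d\GW_\g$, so by the G\"artner--Ellis/projective-limit argument in the weak topology $(\nu_n)$ satisfies the speed-$n$ LDP with good rate function $\nu\mapsto\int G\big(\tfrac{d\nu}{d\GW_\g}\big)\,d\GW_\g$, $G(t)=t-1-\log t$, the singular part of $\nu$ being charged by the recession slope $\lim_{t\to\infty}G(t)/t=1$, i.e.\ by $\nu^{\mathrm{sing}}(\mathbb T)$. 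Applying the contraction principle along $\nu\mapsto\nu/\nu(\mathbb T)$ and optimising over the total mass $c=\nu(\mathbb T)$ — whose optimal value is $c=1$, exactly where the mass term $c-1-\log c$ vanishes because the target is a probability measure — one checks after a short computation that the resulting rate function at $\mu$ is $-\int\log\frac{d\mu}{d\GW_\g}\,d\GW_\g=\mathcal K(\GW_\g\,|\,\mu)$ (equal to $+\infty$ unless $\GW_\g\ll\mu$, as in \eqref{KL}). This is the ``Dirichlet weighting'' lemma underlying the sum rules, transported to $\mathbb T$.

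It remains to pass from the quenched to the annealed LDP. The convergence above is quantitative: for Lipschitz $\phi$, $\big|\frac1n\sum_k\log(1-\phi(x_k))^{-1}-\int\log(1-\phi)^{-1}\,d\GW_\g\big|\le\mathrm{Lip}\big(\log(1-\phi)^{-1}\big)\,W_1\big(\tfrac1n\sum_k\delta_{x_k},\GW_\g\big)$, so the quenched upper and lower LDP bounds hold uniformly over all arrays with $W_1\big(\frac1n\sum\delta_{x_k},\GW_\g\big)\le\delta$, up to an additive $\eta(\delta)\to0$. Intersecting with $A_\delta=\{W_1(\mu_\ud\sn,\GW_\g)\le\delta\}$, whose complement has probability $\le e^{-c(\delta)n^2}$, gives for closed $F$ and open $O$ that $\limsup_n\frac1n\log\mathbb P(\mu_\w\sn\in F)\le-\inf_F\mathcal K(\GW_\g\,|\,\cdot)+\eta(\delta)$ and $\liminf_n\frac1n\log\mathbb P(\mu_\w\sn\in O)\ge-\inf_O\mathcal K(\GW_\g\,|\,\cdot)-\eta(\delta)$; letting $\delta\to0$ yields the speed-$n$ LDP with rate function $\mathcal I^{\GW}_{meas}(\mu)=\mathcal K(\GW_\g\,|\,\mu)$. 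It is a good rate function on the compact space $\mathcal M_1(\mathbb T)$ by lower semicontinuity of relative entropy, and $\mathcal K(\GW_\g\,|\,\mu)=0$ iff $\mu=\GW_\g$, which gives uniqueness of the minimiser.

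The main obstacle is the quenched step: identifying the rate function in the weak topology as exactly $\mathcal K(\GW_\g\,|\,\cdot)$ requires the correct bookkeeping of singular measures in the G\"artner--Ellis step (the recession term) together with the careful optimisation over the normalising mass in the contraction — routine but not entirely trivial. A secondary difficulty, in the annealing step, is the uniformity of the quenched bounds over arrays whose empirical measure is merely close to (not equal to) $\GW_\g$, which is exactly what the Lipschitz/Wasserstein estimate supplies; a little extra care is needed when $|\g|=1$, since there $1+\g\cos\theta$ vanishes at $\theta=\pi$ and $\GW_\g$ is no longer bounded away from $0$.
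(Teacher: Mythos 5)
The paper does not prove Theorem~\ref{appliGW}; it cites it as Cor.~4.5 of \cite{GNROPUC}. Your argument --- decompose $U=V\,\diag(e^{\ii\Theta_k})\,V^\star$, observe that the Gross--Witten tilt in \eqref{GW0} depends only on the eigenphases so the weight vector remains $\mathrm{Dirichlet}(1,\dots,1)$ and independent of $(\Theta_k)$ as under $\CUE(n)$, prove a quenched speed-$n$ LDP for $\sum_k \w_k\delta_{x_k}$ with rate $\mathcal K(\GW_\g\,|\,\cdot)$ depending only on the weak limit of $\frac1n\sum_k\delta_{x_k}$, and anneal via the speed-$n^2$ concentration of $\mu_\ud\sn$ at the equilibrium measure $\GW_\g$ --- is exactly the ``measure-side'' mechanism of the Gamboa--Nagel--Rouault programme used in the cited reference. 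Your bookkeeping of the contraction over the total mass $c$ (yielding $-\int\log h\,d\GW_\g$ at the optimum $c=1$) and of the recession term charging $\nu^{\mathrm{sing}}(\T)$ both match the rate function in \eqref{KL}.

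Two steps deserve more than the sketch. First, the quenched lemma: invoking ``G\"artner--Ellis/projective-limit'' on $\mathcal M_1(\T)$ with the weak topology is not a turnkey application; one must either verify the abstract hypotheses (exposed points, essential smoothness for the cumulant $\Lambda(\phi)=-\int\log(1-\phi)\,d\GW_\g$ on the infinite-dimensional cone $\{\|\phi\|_\infty<1\}$) or, as the GNR papers actually do, run a direct Cram\'er-type argument and identify the projective-limit rate function. The pointwise Legendre transform you compute is right, but the passage from finite-dimensional marginals via Dawson--G\"artner to the stated rate --- including the precise way the singular part enters --- is where the real work sits and should be spelled out rather than asserted. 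Second, the boundary case $|\g|=1$: you rightly flag it. There the density $1+\g\cos\theta$ vanishes at one point, so $\GW_\g$ is not bounded below; $\mathcal K(\GW_\g\,|\,\cdot)$ is still well defined, but the Wasserstein-uniformity of the quenched bounds (which requires a Lipschitz control of $\theta\mapsto\log(1-\phi(\theta))^{-1}$ uniformly in the relevant class of $\phi$) needs re-examination near the vanishing point. Neither issue is a genuine gap in the sense of a wrong idea --- both are handled in \cite{GNROPUC} --- but as written they are the two places where the proposal is an outline rather than a proof.
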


\subsubsection{Coefficient side - sum rules}
For a probability measure $\mu$ on $\T$ we denote by  $\abold := (\alpha_k)_k$ the sequence of its Verblunsky coefficients.

On the unit circle, the most famous sum rule is the Szeg{\H o} formula:
\begin{equation}
\label{SVsum}
\mathcal K(\lambda_0\, |\ \mu) = -\sum_{k \geq 0} \log(1 - |\alpha_k|^2)
\end{equation}
where, 
 as above $\lambda_0$ is the normalized Lebesgue measure on  $\mathbb T$, whose Verblunsky coefficients are $\alpha_k = 0$ for every $k$.

There are many proofs of (\ref{SVsum}) in \cite{simon05} and a probabilistic proof in \cite{gamboacanonical}.

In the Gross-Witten case, we define, for $|\g| \leq 1$
\begin{align}
\label{defigw}
\mathcal I_\coeff^{\GW} (\al) := 
  \mathcal K(\GW_{\g}\, |\ \lambda_0) - \g \Re \left( \alpha_0- \sum_{k=1}^\infty \alpha_k \bar\alpha_{k-1}\right)+ \sum_{k=0}^\infty - \log (1 - |\alpha_k|^2)\,.
\end{align}
where 
\begin{equation}
\label{defH}
\mathcal K(\GW_{\g}\, |\ \lambda_0) := 
 1 -\sqrt{1-\g^2} +  \log \frac{1+\sqrt{1-\g^2}}{2} 
\,.\end{equation}
The following sum rule was pointed out in \cite{simon05} Theorem 2.8.1 for $\GW_{-1}$  
 and extended in Cor. 5.4 in \cite{GNROPUC}) for $ -1 < \g < 0$, but the proof remains valid for $0 < \g\leq 1$. In \cite{BSZ}, the authors proved the LDP for the coefficient side  when $\g =-1$  by probabilistic arguments, and actually 
this  proof may be extended easily to the case $|\g| < 1$.

\begin{theorem}
\label{coreasy}
Let $\mu$ be a probability measure on $\mathbb{T}$ with Verblunsky coefficients $\abold = (\alpha_k)_{k \geq 0}\in \mathbb D^{\mathbb N}$. Then,
for $0 \leq|\g| < 1$, 
we have
\begin{align}
\label{sumrulegwg}
\mathcal I^{\GW}_\coeff (\abold) = \mathcal I^{\GW}_\meas(\mu)\,. 
\end{align}
\end{theorem}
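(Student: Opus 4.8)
The plan is to evaluate the measure–side functional $\mathcal K(\GW_\g\,|\,\mu)$ directly from the explicit density $\tfrac1{2\pi}(1+\g\cos\theta)$ of $\GW_\g$, and to recognise the outcome as exactly the three terms of \eqref{defigw}. First I would dispose of the degenerate case. Since $|\g|<1$, the density of $\GW_\g$ is bounded above and below by positive constants, so $\GW_\g$ and $\lambda_0$ are mutually absolutely continuous and $\mathcal K(\GW_\g\,|\,\mu)<\infty$ precisely when $\mu$ lies in the Szeg\H o class, i.e. $d\mu=w\,\tfrac{d\theta}{2\pi}$ with $\log w\in L^1(\mathbb T)$, equivalently $\sum_k|\alpha_k|^2<\infty$. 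Off the Szeg\H o class, using $-\log(1-|\alpha_k|^2)\ge|\alpha_k|^2$, $|\alpha_k\bar\alpha_{k-1}|\le\tfrac12(|\alpha_k|^2+|\alpha_{k-1}|^2)$ and $|\g|<1$, one checks that $\mathcal I^{\GW}_\coeff(\abold)=+\infty$ as well, so the two sides agree there; hence one may assume throughout that $\mu$ is in the Szeg\H o class.

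On the Szeg\H o class I would write
\[
\mathcal K(\GW_\g\,|\,\mu)=\int_{-\pi}^{\pi}\bigl(\log(1+\g\cos\theta)-\log w(\theta)\bigr)(1+\g\cos\theta)\,\frac{d\theta}{2\pi}=A-B-\g\,C,
\]
with $A=\int(1+\g\cos\theta)\log(1+\g\cos\theta)\,\tfrac{d\theta}{2\pi}$, $B=\int\log w\,\tfrac{d\theta}{2\pi}$ and $C=\int\cos\theta\,\log w\,\tfrac{d\theta}{2\pi}$. The term $A$ is elementary: factoring the trigonometric polynomial as $1+\g\cos\theta=c\,|e^{\ii\theta}-r|^2$ with $c=\tfrac12(1+\sqrt{1-\g^2})$ and $r=-\g/(1+\sqrt{1-\g^2})\in(-1,1)$, Jensen's formula gives $\int\log|e^{\ii\theta}-r|\,\tfrac{d\theta}{2\pi}=0$, and expanding $\log|e^{\ii\theta}-r|^2$ in Fourier series gives $\int\cos\theta\,\log(1+\g\cos\theta)\,\tfrac{d\theta}{2\pi}=-r=\tfrac{1-\sqrt{1-\g^2}}{\g}$; hence $A=\log c+\g(-r)$, which is $\mathcal K(\GW_\g\,|\,\lambda_0)$ as in \eqref{defH}. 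The term $B$ is identified by the Szeg\H o formula \eqref{SVsum}: $-B=\mathcal K(\lambda_0\,|\,\mu)=-\sum_{k\ge0}\log(1-|\alpha_k|^2)$.

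The heart of the proof, and the step I expect to be the main obstacle, is the identification of $C=\Re\,\widehat L_1$, where $\widehat L_1:=\int_{-\pi}^{\pi}e^{-\ii\theta}\log w(\theta)\,\tfrac{d\theta}{2\pi}$ is the first Fourier coefficient of $\log w$, with $\Re\bigl(\alpha_0-\sum_{k\ge1}\alpha_k\bar\alpha_{k-1}\bigr)$. I would obtain this through the Szeg\H o function $D$ of $\mu$: from $\log D(z)=\tfrac12\widehat L_0+\sum_{n\ge1}\widehat L_n z^n$ near $0$ together with the Szeg\H o-class identity $D(z)^{-1}=\lim_n\varphi_n^*(z)$ (uniformly on a neighbourhood of $0$, hence coefficientwise) one gets $\widehat L_1=D'(0)/D(0)=-\lim_n(\varphi_n^*)'(0)/\varphi_n^*(0)$. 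Writing the monic orthogonal polynomials as $\Phi_n(z)=z^n+c_n z^{n-1}+\cdots$ and using the Szeg\H o recursion $\Phi_{n+1}=z\Phi_n-\bar\alpha_n\Phi_n^*$ with $\Phi_n(0)=-\bar\alpha_{n-1}$, one finds $c_1=-\bar\alpha_0$ and $c_{n+1}=c_n+\alpha_{n-1}\bar\alpha_n$, hence $c_n=-\bar\alpha_0+\sum_{k=1}^{n-1}\alpha_{k-1}\bar\alpha_k$; since $\varphi_n=\kappa_n\Phi_n$ with $\kappa_n>0$, $(\varphi_n^*)'(0)/\varphi_n^*(0)=\overline{c_n}$, and letting $n\to\infty$ (the series converges absolutely because $\sum_k|\alpha_k|^2<\infty$) gives $\widehat L_1=\alpha_0-\sum_{k\ge1}\alpha_k\bar\alpha_{k-1}$. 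This is the first higher-order Szeg\H o sum rule (\cite{simon05}); the delicate point is precisely to justify $D^{-1}=\lim_n\varphi_n^*$ and its termwise differentiation at $0$ at the bare Szeg\H o-class level, with no extra regularity on $w$. Feeding $A$, $B$, $C$ into the displayed decomposition then yields
\[
\mathcal K(\GW_\g\,|\,\mu)=\mathcal K(\GW_\g\,|\,\lambda_0)-\g\,\Re\Bigl(\alpha_0-\sum_{k\ge1}\alpha_k\bar\alpha_{k-1}\Bigr)-\sum_{k\ge0}\log(1-|\alpha_k|^2)=\mathcal I^{\GW}_\coeff(\abold),
\]
which is the assertion.

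As a cross-check, and a route that bypasses the analytic subtlety above, one can argue by uniqueness of rate functions: $\mathcal I^{\GW}_\meas$ is a good rate function for the LDP of $(\mu_\w\sn)$ under $\mathbb G\mathbb W_\g(n)$ by Theorem \ref{appliGW}, while $\mathcal I^{\GW}_\coeff$ is a good rate function for the very same sequence, obtained by transporting that LDP along the homeomorphic correspondence $\mu\leftrightarrow\abold$ by the probabilistic argument of \cite{BSZ} (written there for $\g=-1$, but valid for $|\g|<1$); since the rate function of an LDP is unique, the two expressions must coincide on all of $\mathcal M_1(\mathbb T)$.
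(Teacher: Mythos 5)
Your proof is correct, and it is essentially the argument the paper gestures at without writing out: the paper does not give a self-contained proof of Theorem \ref{coreasy}, but instead cites Simon's Theorem 2.8.1 (the higher-order Szeg\H o theorem for $\GW_{-1}$), its extension in Corollary 5.4 of \cite{GNROPUC}, and the probabilistic proof in \cite{BSZ}. Your direct computation reproduces precisely the analytic route: the split of $\mathcal K(\GW_\g\,|\,\mu)$ into $A-B-\g C$, the factorisation $1+\g\cos\theta=c\,|e^{\ii\theta}-r|^2$ together with Jensen to evaluate $A=\mathcal K(\GW_\g\,|\,\lambda_0)$, the classical Szeg\H o formula \eqref{SVsum} for $B$, and the first higher-order coefficient identity $\widehat L_1=\alpha_0-\sum_{k\ge1}\alpha_k\bar\alpha_{k-1}$ for $C$, obtained through $D^{-1}=\lim_n\varphi_n^*$ and the Szeg\H o recursion. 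I checked all the constants: with $c=\frac{1+\sqrt{1-\g^2}}{2}$ and $r=-\g/(1+\sqrt{1-\g^2})$ one has $\log c-\g r=1-\sqrt{1-\g^2}+\log\frac{1+\sqrt{1-\g^2}}{2}$, matching \eqref{defH}, and the recursion $c_{n+1}=c_n+\alpha_{n-1}\bar\alpha_n$, $c_1=-\bar\alpha_0$ is correct. Your treatment of the non-Szeg\H o case is also right: the lower bound $S_N\ge-|\g|+(1-|\g|)\sum_{k\le N}|\alpha_k|^2$ uses $|\g|<1$ crucially and shows the coefficient side also diverges. The one point where you are slightly cavalier is calling $D^{-1}=\lim_n\varphi_n^*$ "the delicate point"; in fact this is a standard Szeg\H o-class theorem (uniform convergence on compacts of $\mathbb D$), and convergence of derivatives follows automatically from holomorphy, so there is no extra regularity hypothesis needed — you can simply cite it. Your cross-check by uniqueness of the LDP rate function is exactly the \cite{BSZ} route the paper also mentions, and it is a legitimate independent proof.
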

\begin{remark}
The case $|\g| > 1$ is more complex and involve outliers (see  \cite{GNROPUC} and \cite{Gateway}).
\end{remark}

\section{LDP for perturbations} \label{sec:LDPperturb}
\lb{pert}

We are now in position to state and prove our main results, which are concerned with large deviations of spectral measures of rank-one perturbation of the models introduced in Section \ref{sec:notation}. As advertised during the Introduction, we will always provide two proofs. The first proof, which will be called the {\it direct proof}, uses the fact that the law of the spectral measure of the deformed model is a tilted version of the law of the initial model. The second proof, which will be called the {\it alternative proof}, uses the fact that the Jacobi (resp. Verblunsky) coefficients of the deformed models are simple perturbations of the initial coefficients (in fact, only one parameter is affected).

\subsection{Additive perturbation - Gaussian case}\label{subsec:AddHerm}
For all $n \geq 1$, let us consider
\[W_n = \frac{1}{\sqrt{n}} X_n + A_n,\]
where $X_n$ follows the $\GUE (n)$ distribution and $A_n$ is a rank-one Hermitian deterministic matrix  of size $n \times n$. Since the Gaussian Unitary Ensemble is unitarily invariant, we can assume that  $A_n = \theta uu^*$, where $\theta\in \mathbb R$ and where $u = e_1$ is the first vector of the canonical basis. Let $\mu_\w\sn$ be the spectral measure of the pair $(W_n, u)$. 
It is known (\cite{lee2015edge} Th. 4.6, \cite{noiry2019spectral} Cor. 1) that, as $n \rightarrow \infty$, $\mu_\w\sn$ converges in probability towards the following probability measure:
\begin{align}
\label{defmuwlim}
\mu_{\SC, \theta}(dx) = \frac{\sqrt{(4-x^2)_+}}{2\pi (\theta^2 +1 - \theta x)}\ dx + \left(1 - \theta^{-2} \right)_{+}  \delta_{\theta+ \theta^{-1}}\,.
\end{align}
Our first result establishes a large deviation principle for the sequence of probability measures $(\mu_\w\sn)_{n \geq 1}$.
\begin{theorem}\label{theo:perturbGUE}
The family $(\mu_\w\sn)$ satisfies in $\mathcal M_1^d(\mathbb R)$ (see Cor. \ref{cor42}) the LDP at scale $n$ with good rate function
\begin{align}\label{eq:Rate1}
\mathcal I^W(\mu) = 
\left\{
\begin{array}{lr}
\mathcal K(\SC\, |\ \mu) - \theta m_1(\mu)+ \frac12 \theta^2 + \sum_k \mathcal F_H (E_k^\pm) & \hbox{if} \  \mu \in \Sr_1(-2,2), \\
\infty & \hbox{otherwise}.
\end{array}
\right.
\end{align}
Moreover:
\begin{enumerate}
\item $\mu_{\SC, \theta}$ is the unique minimizer of $\mathcal I^W$, 
\item $\mu_{\SC, \theta}$ is the unique minimizer of $\mathcal I^H_\meas$ under the constraint $m_1(\mu) = \theta$, where we recall that $\mathcal I^H_\meas$ is defined in \eqref{eq:defnImeasGUE}.
\end{enumerate}
\end{theorem}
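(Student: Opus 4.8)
The plan is to prove the LDP via the "direct proof" strategy announced in the paper: exploit the fact that the law of $W_n=\frac1{\sqrt n}X_n+\theta e_1e_1^*$ is an explicit tilt of the $\GUE(n)$ law, and transfer this tilt to the level of the spectral measure $\mu_\w\sn$, where it becomes multiplication by a functional of the first moment. Concretely, since $\tr(W_nW_n^*)=\tr(H_nH_n^*)+2\theta\langle e_1,H_ne_1\rangle+\theta^2$ (with $H_n=\frac1{\sqrt n}X_n$), the Radon–Nikodym derivative of the law of $W_n$ with respect to that of $H_n$ (both viewed as laws on Hermitian matrices, or equivalently on pairs $(M_n,e_1)$) is proportional to $\exp\!\big(n\theta\langle e_1,H_ne_1\rangle-\tfrac{n}{2}\theta^2\big)$. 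Now $\langle e_1,H_ne_1\rangle=\int x\,d\mu_\w\sn(x)=m_1(\mu_\w\sn)$ by definition of the spectral measure, so the tilt is exactly $\exp\!\big(n(\theta m_1(\mu_\w\sn)-\tfrac12\theta^2)\big)$. Thus the law of $\mu_\w\sn$ under $\mathbb P_\theta$ is, up to the normalizing constant, $e^{n(\theta m_1(\mu)-\frac12\theta^2)}$ times the law under $\mathbb P_0=\GUE(n)$, for which Theorem (the $\mathcal I^H_\meas$ LDP) and Corollary \ref{cor42} give an LDP at scale $n$ in $\mathcal M_1^d(\mathbb R)$ with the $\mathcal T^m$ topology.

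The main technical step is then a tilted-LDP (Varadhan-type) argument: if $(\nu_n)$ satisfies an LDP at scale $n$ with good rate function $I$ in a space $\mathcal X$, and $F:\mathcal X\to\mathbb R$ is continuous and satisfies a suitable exponential integrability / growth condition, then the tilted measures $d\ti\nu_n\propto e^{nF}d\nu_n$ satisfy an LDP at scale $n$ with good rate function $\ti I(x)=I(x)-F(x)-\inf_y(I(y)-F(y))$. Here $F(\mu)=\theta m_1(\mu)$ is continuous for the $\mathcal T^m$ topology by construction, and the $-\frac12\theta^2$ is a constant that cancels in the normalization. The growth/tightness hypothesis needed for Varadhan's lemma (that $\limsup\frac1n\log\int e^{n(1+\delta)F}d\nu_n<\infty$ for some $\delta>0$, or the moment-condition variant in \cite{demboz98} Exercise 4.3.11 / Theorem 4.3.1) must be checked: one controls $\mathbb E_0[e^{n(1+\delta)\theta m_1(\mu_\w\sn)}]=\mathbb E_0[e^{n(1+\delta)\theta\langle e_1,H_ne_1\rangle}]$, which is a Gaussian moment-generating function since $\langle e_1,H_ne_1\rangle=\frac1{\sqrt n}(X_n)_{11}$ is (after rescaling) a centered real Gaussian of variance $1/n$; hence $\mathbb E_0[e^{tn\langle e_1,H_ne_1\rangle}]=e^{t^2n/2}$, so the required bound holds for every $t$ and there is no obstruction. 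This yields the LDP with rate function $\mu\mapsto \mathcal I^H_\meas(\mu)-\theta m_1(\mu)+c$, and identifying the constant $c$ via $\inf_\mu(\mathcal I^H_\meas(\mu)-\theta m_1(\mu)+\frac12\theta^2)=0$ — equivalently checking that $\mathcal I^W$ as written in \eqref{eq:Rate1} has infimum $0$ — will follow from item (1), so the two are proved together.

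For item (1), that $\mu_{\SC,\theta}$ is the unique minimizer of $\mathcal I^W$: since $\mathcal I^W$ is a good rate function of a genuine LDP for a sequence converging in probability (by \eqref{defmuwlim}) to $\mu_{\SC,\theta}$, standard LDP theory forces $\mathcal I^W(\mu_{\SC,\theta})=0$ and, by uniqueness of the minimizer which I must still verify, that $\mu_{\SC,\theta}$ is the only zero. Uniqueness: $\mathcal K(\SC\,|\,\cdot\,)$ is strictly convex in its second argument on the relevant set, $-\theta m_1(\mu)$ is linear, and $\sum_k\mathcal F_H(E_k^\pm)$ is convex and nonnegative, so $\mathcal I^W$ is strictly convex on $\Sr_1(-2,2)$ and has at most one minimizer; combined with the convergence in probability this pins it down to $\mu_{\SC,\theta}$. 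Alternatively, and perhaps more cleanly, one can directly perform the variational computation: minimizing $\mathcal K(\SC\,|\,\mu_{|I})-\theta\int x\,d\mu$ over the absolutely continuous part by a Lagrange/Euler argument gives density proportional to $\frac1{(2\pi)^{-1}\sqrt{4-x^2}}\cdot$(normalization) adjusted by the $e^{\theta x}$-type tilt — working this out reproduces the density $\frac{\sqrt{(4-x^2)_+}}{2\pi(\theta^2+1-\theta x)}$, and optimizing the outlier part over a single atom at $E^+>2$ against $\mathcal F_H(E^+)-\theta E^+$ yields the atom at $\theta+\theta^{-1}$ with mass $(1-\theta^{-2})_+$, exactly $\mu_{\SC,\theta}$.

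Item (2) is then almost immediate: minimizing $\mathcal I^H_\meas(\mu)$ subject to $m_1(\mu)=\theta$ is, by Lagrange duality for this convex problem, equivalent to minimizing $\mathcal I^H_\meas(\mu)-\theta\, m_1(\mu)$ (the multiplier being exactly $\theta$, the value forced by \eqref{defmuwlim} and the structure of $\mu_{\SC,\theta}$, which indeed has $m_1(\mu_{\SC,\theta})=\theta$ — a computation I would record, integrating $x$ against the density plus the atom), i.e. to minimizing $\mathcal I^W(\mu)+\frac12\theta^2$; by item (1) the unique solution is $\mu_{\SC,\theta}$. One should note $m_1(\mu_{\SC,\theta})=\theta$ is consistent and that the constrained minimum exists because $\{\mu:m_1(\mu)=\theta\}$ is closed and $\mathcal I^H_\meas$ is a good rate function. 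I expect the only genuinely delicate point to be the exact justification of the moment-topology Varadhan step — i.e. verifying the exponential integrability condition in the $\mathcal T^m$ topology and that $m_1$ is $\mathcal T^m$-continuous (it is, essentially by definition of that topology) rather than merely weak-continuous (it is not, because of escaping mass) — everything else being convex-analysis bookkeeping and an explicit but routine variational computation.
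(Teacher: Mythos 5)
Your proof of the LDP is essentially the paper's own ``direct proof'': you compute the same Radon--Nikodym tilt, identify $u^*W u = m_1(\mu_\w\sn)$, reduce to the $\GUE$ LDP in $\mathcal M_1^d$ with the $\mathcal T^m$ topology, and apply Varadhan's lemma, verifying the exponential moment condition via the Gaussian moment generating function for $(X_n)_{11}$. Item (2) also matches the paper: both arguments amount to observing that under the constraint $m_1(\mu)=\theta$ the two rate functions differ by a constant.

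The genuine difference is in item (1). The paper reads off the minimizer from the coefficient side: the sum rule \eqref{srH} converts $\mathcal I^W$ into $\mathcal I^W_\coeff(\a,\mathfrak b)=\tfrac12(b_1-\theta)^2+\tfrac12\sum_{k\geq2}b_k^2+\sum_k G(a_k^2)$, a sum of strictly convex one-variable functions of the Jacobi coefficients, so the unique minimizer has $\jac = \bigl(\begin{smallmatrix}\theta&0&\cdots\\ 1&1&\cdots\end{smallmatrix}\bigr)$, which is then matched with $\mu_{\SC,\theta}$ via the free Meixner classification in the appendix. You instead argue strict convexity directly on the measure side, combining it with the convergence in probability to $\mu_{\SC,\theta}$. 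That route can be made to work, but the strict convexity of $\mathcal K(\SC\,|\,\cdot)+\sum_k\mathcal F_H(E_k^\pm)$ on $\Sr_1(-2,2)$ is not immediate: $\mathcal K(\SC\,|\,\cdot)$ is strictly convex only in directions that change the density $d\mu_{ac}/d\SC$, the outlier sum $\sum_k\mathcal F_H(E_k^\pm)$ is insensitive to the weights $\gamma_k^\pm$, and the normalization constraint is what rules out the degenerate directions --- this case analysis is exactly what the coefficient-side picture makes unnecessary. The coefficient-side route is also what produces the explicit identification with $\mu_{\SC,\theta}$ (via Lemma \ref{affine} and Section \ref{freeM}), which your variational sketch would have to reconstruct by hand. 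So your argument is correct in outline but the paper's is tighter and, for (1), takes a structurally different and cleaner path.
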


\begin{proof}

We first prove \eqref{eq:Rate1} using two different arguments.
\medskip

A) Direct proof. If  $X_n$ has the  $\GUE(n)$ distribution (see (\ref{defGUE})), the distribution of $W = n^{-1/2}X_n+ \theta uu^*$ is 
\[\mathbb P\sn_\theta (dW) = \frac{1}{\mathcal Z_n} \exp \left( -\frac{n}{2} \tr ((W-\theta uu^*)(W^\star - \theta uu^*)) \right)   dW. \]
But
\begin{align*}\nonumber
\tr ((W-\theta uu^*)( & W^\star  - \theta uu^*))\\ 
&= \tr (WW^\star) - \theta \left(\tr(Wuu^*) + \tr (uu^* W^\star)\right) + \theta^2 \tr (uu^*) \\
&= \tr (WW^\star) - 2 \theta u^* W u + \theta^2,
\end{align*}
which allows us to rewrite
\[\mathbb P_\theta\sn (dW) =
\exp \left( -\frac{n}{2} (\theta^2 - 2 \theta u^* W u) \right) \ \mathbb P_0\sn (dW)\,. \]
Since $u=e_1$, one has $u^* W u = W_{11} = m_1 (\mu_\w\sn)$, which yields
\begin{align}
\notag
\mathbb P_\theta\sn( \mu_\w\sn \in d\mu) &= 
\frac{\exp n \Psi(\mu)}{\mathbb P_0\sn\left(\exp n \Psi(\mu_\w\sn)\right)}
\mathbb P_0\sn( \mu_\w\sn \in d\mu),
\end{align}
where
\[
\Psi(\mu) = \theta m_1 (\mu) 
\]
and
\begin{align}
\label{gamma}
\mathbb P_0\sn\left(\exp n \Psi(\mu_\w\sn)\right) = \exp \frac{n\theta^2}{2}\,.
\end{align}
It remains to  apply Varadhan's lemma (see \cite{ellis1985entropy} Th II.7.2,  \cite{demboz98} Th 4.3.1 or \cite{deuschel2001large} Exercise 2.1.24):
\begin{itemize}
\item $\Psi$ is continuous with respect to $\mathcal M_1^d$
\item  the uniform exponential integrability condition is satisfied since
(\ref{gamma}) implies
\[
 \frac{1}{n} \log \mathbb P_0\sn[ \exp \gamma n m_1 (\mu_\w\sn)]
 =   \frac{\gamma^2 }{2}\,.\]
\end{itemize}
The rate function is then $\mathcal I^H_\meas - \Psi + \frac{\theta^2}{2}$ since
 (\ref{gamma}) provides the infimum term.
\medskip

B) Alternative proof. Fix $n\geq 1$. A consequence of the tridiagonal representation of the $\GUE(n)$ of Dumitriu and Edelman (\cite{agz} Sec. 4.5) is that $\mu_\w\sn$ is the spectral measure of the pair $(J_n,e_1)$, where $J_n$ is the following random Jacobi matrix:
\[  J_n \sim 
\left(
\begin{array}{ccccc}
\mathcal{N}( 0, \frac{1}{n}) + \theta       & \frac{1}{\sqrt{n}} \chi_{2(n-1)} &                  &               &                 \\
\frac{1}{\sqrt{n}} \chi_{2(n-1)} & \mathcal{N}( 0, \frac{1}{n})        & \frac{1}{\sqrt{n}} \chi_{2(n-2)}   &               &                 \\
               & \frac{1}{\sqrt{n}} \chi_{2(n-2)}   & \mathcal{N}( 0, \frac{1}{n})          & \ddots        &                 \\
               &                &  \ddots          & \ddots        & \frac{1}{\sqrt{n}} \chi_{2}      \\
               &                &                  & \frac{1}{\sqrt{n}} \chi_{2}      & \mathcal{N}( 0, \frac{1}{n})
\end{array}
\right). \] 
Here, the matrix $J_n$ is symmetric and up to this symmetry, its coefficients are independent. Note that this corresponds to the usual tridiagonalisation of the $\GUE(n)$ except for the addition of the parameter $\theta$ to the $(1,1)$ coefficient. 

Fix $\mu$ a measure on $\mathbb R$ with Jacobi parameters $\a = (a_n)_{n\geq 1}$ and $\mathfrak{b} =(b_n)_{n \geq 1}$. Then, using a projective method and  the independence of the coefficients of $J_n$, as in \cite{gamboa2011large}  we see that  $(\mu_\w\sn)$ satisfies the LDP in $\mathcal M_1^d$ equipped with $\mathcal T_m$, with 
the rate function 
 given by
\begin{align}\label{forinf}
\mathcal I^W_{\coeff}(\a , \mathfrak{b} )  &= \frac{1}{2}(b_1-\theta)^2 +\frac{1}{2}\sum_{k \geq 2} b_k^2 + + \sum_{k\geq 1}  G(a_k^2)\\
\lb{forsup}
&=  \frac12 \theta^2 - b_1\theta + \mathcal I^H_{\coeff}( \a , \mathfrak{b})\,.
\end{align}
But, by Theorem \ref{sumruleg},
\[\mathcal I^H_{\coeff}(\a, \mathfrak{b} ) =\mathcal I^ H_\meas(\mu) = \mathcal K(\SC\, |\ \mu) + \sum_k \mathcal F (E_k^\pm)\,.\] 
Besides,  $b_1 = m_1 (\mu)$, so that the random measure $\mu_\w\sn$ satisfies the LDP at scale $n$ with rate function
\begin{align}
\label{I=IH}\mathcal I^W(\mu) =  \mathcal I_{\coeff}^W(\a, \mathfrak{b}) = \mathcal K(\SC\, |\ \mu) - \theta m_1 (\mu)
+ \frac12 \theta^2 + \sum_k \mathcal F (E_k^\pm)\,.\end{align}

\bigskip

We now turn to the proofs of (1) and (2).

\noindent (1) The infimum can be looked from the coefficient side, i.e. from (\ref{I=IH}) and (\ref{forinf}), and is given by:
\[\jac(\operatorname{argmin} \mathcal I^W)= \begin{pmatrix}
\theta, & 0,& 0,& \cdots\\
1, & 1,& 1, &\cdots
\end{pmatrix}. \]
Using section \ref{freeM}, we deduce that $\jac(\operatorname{argmin} \mathcal I^W) = \jac(\tilde\mu_{-\theta,0}) $, namely $\operatorname{argmin} \mathcal I^W = \mu_{\SC, \theta}$.

\smallskip

\noindent (2) If $m_1(\mu) = \theta$, we deduce from (\ref{forsup}) and the sum rules that
\[\mathcal I^H_{\meas}(\mu) = \mathcal I^W_{\meas} (\mu) + \frac{\theta^2}{2}\geq \frac{\theta^2}{2}\,,\]
with equality if and only if $\mu= \mu_{\SC, \theta}$.

\end{proof}

\begin{remarks}
\begin{enumerate}
\item
Let us first observe that we can check by hands that the minimum of $\mathcal I^W$ is zero. Indeed, we have
\begin{align*}
\mathcal F_H(\theta + \theta^{-1}) = \frac{\theta^2 - \theta^{-2}}{2} - 2 \log \theta
\end{align*}
and besides  $\mathcal F_H$, which is the rate function of the top eigenvalue,  is also given by (\cite{agz} Th. 2.6.6
\footnote{There is a mistake in \cite{agz} p.81 see http://www.wisdom.weizmann.ac.il/~zeitouni/cormat.pdf}) 
\[ \mathcal F_H (\theta + \theta^{-1}) = \frac{(\theta + \theta^{-1})^2}{2} - 2 \int \log (\theta + \theta^{-1} -x) \SC(dx) -
1, \]
Since
\begin{align*} 
\mathcal K(\SC\, |\ \mu_{\SC, \theta}) 
&= \int \log (1 + \theta^2  - \theta x) \SC(dx)\\  
&= \log \theta + \int \log (\theta + \theta^{-1} -x) \SC(dx)\,,
\end{align*}
we deduce that 
$\mathcal K(\SC\, |\ \mu_{\SC, \theta}) + \mathcal{F} ( \theta + \theta^{-1}) = \frac{\theta^2}{2}$ and :
\begin{align*}
\mathcal I_{\meas}^W ( \mu_{\SC, \theta}) 
&= \mathcal K(\SC\, |\ \mu_{\SC, \theta}) + \mathcal{F} ( \theta + \theta^{-1})  - \theta m_1(\mu_{\SC, \theta}) + \frac{\theta^2}{2}  \\
&= \frac{\theta^2}{2}  - \theta^2 + \frac{\theta^2}{2} = 0.
\end{align*}

\item The fact that $\mu_{SC,\theta}$ is the only minimizer of $\mathcal I^W$ allows to retrieve the convergence of $\mu_\w\sn$ towards $\mu_{\theta, \SC}$, and actually to strengthen the convergence in probability into an almost sure convergence.
\end{enumerate}
\end{remarks}

\subsection{Multiplicative perturbation}\label{subsec:MultPert}

For all $n \geq 1$, let us consider
\[S_n = \frac{1}{n} \Sigma_n^{1/2}L_n\Sigma_n^{1/2}\]
where $\Sigma_n = \diag (\theta, 1, \dots)$ with $\theta >0$. It is known (as a consequence of the anisotropic local laws derived in \cite{knowles-yin}) that the sequence of measure $(\mu_\w\sn)_{n \geq 1}$ has a limit. An explicit computation of the limiting measure $\mu^{L,\theta}$ can be performed as in \cite{noiry2019spectral}, and we get\footnote{This is the same measure as
$\mu_{\MP, \tau^{-1},\theta}$ in \cite{noiry2019spectral} up to a little change, due to the convention on the definition of sample covariance matrix.} 
\begin{align}
\label{muL}
\mu^L(dx) = \frac{\sqrt{4\tau- (x- (1+\tau))^2}}{2\pi x\left((\theta+\tau -1) + x (\theta^{-1}-1)\right)}dx + u\delta_0  + v\delta_w,
\end{align}
with 
\[u = \frac{(\tau-1)_+}{\theta+\tau-1} , \,  v = \frac{\tau\left((\theta-1)^2 -\tau\right)_+}{(\theta -1)(\theta+\tau-1)}, \,  w=- \frac{\theta+\tau-1}{\theta^{-1} -1}\,.\]
Here, we will restrict the setting to the case where $n/N \to \tau <1$, that is to the case where $\mu^L$ does not have a mass at zero. In this context, we obtain a large deviation principle for the family of spectral measures $(\mu_\w\sn)_{n\geq 1}$ associated to the pairs $(S_n,e_1)$.
\begin{theorem}
\lb{corLUE}
The family $(\mu_\w\sn)$ satisfies the LDP at scale $n$ in $\mathcal M_1^d$ with good rate function
\begin{equation}
\label{IWL}
\mathcal I^S(\mu) =
\left\{
\begin{array}{lr}
\mathcal K(\MP_\tau\, |\ \mu) + \frac{\theta^{-1} -1}{\tau} m_1(\mu) + \frac{1}{\tau}
\log \theta
  + \sum\limits_{k} \mathcal F^\pm_L (E_k^\pm) & \hbox{if} \  \mu \in \Sr_1(\tau^-, \tau^+)\\
\infty & \hbox{otherwise}.
\end{array}
\right.
\end{equation} 
Moreover,
\begin{enumerate}
\item $\mu^{L,\theta}$ is the unique minimizer of $\mathcal I^S$,

\item $\mu^{L,\theta}$ is the unique minimizer of $\mathcal I^L$ under the constraint 
$m_1(\mu) = \theta$.
\end{enumerate}
\end{theorem}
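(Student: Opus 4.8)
The plan is to mirror closely the two-proof strategy of Theorem~\ref{theo:perturbGUE}: a \emph{direct proof}, exploiting that the law $\mathbb{Q}_\theta^{(n)}$ of (the suitably rescaled matrix) $S_n$ is a tilt of the Laguerre law $\mathbb{Q}_1^{(n)}$, and an \emph{alternative proof}, using that in the Dumitriu--Edelman bidiagonal realisation only the first parameter of the decomposition~\eqref{zerl1} is affected. In both cases the LDP for the undeformed spectral measure (the theorem with rate function $\mathcal I^L_\meas$, upgraded to the $\mathcal T^m$ topology by Corollary~\ref{cor42}) and the sum rule~\eqref{srL} carry the load.

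\emph{Direct proof.} First I would compute the density of $S_n=\Sigma_n^{1/2}L_n\Sigma_n^{1/2}$ relative to $\mathbb{Q}_1^{(n)}$. The change of variables $L_n=\Sigma_n^{-1/2}S_n\Sigma_n^{-1/2}$ on Hermitian matrices has Jacobian $(\det\Sigma_n)^{-n}=\theta^{-n}$, and $\det L_n=\theta^{-1}\det S_n$ while $\tr L_n=\tr(\Sigma_n^{-1}S_n)=\tr S_n+(\theta^{-1}-1)(S_n)_{11}$; inserting this in~\eqref{defLUE} and collecting the powers of $\theta$ gives
\begin{equation*}
\frac{d\mathbb{Q}_\theta^{(n)}}{d\mathbb{Q}_1^{(n)}}(S)=\theta^{-N}\exp\!\big(-N(\theta^{-1}-1)\,S_{11}\big),
\end{equation*}
and $S_{11}=e_1^\star S\,e_1=m_1(\mu_\w\sn)$. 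Hence the law of $\mu_\w\sn$ under $\mathbb{Q}_\theta^{(n)}$ is the normalised tilt of its law under $\mathbb{Q}_1^{(n)}$ by $\exp\!\big(n\Psi_n(\mu)\big)$ with $\Psi_n(\mu)=-\tfrac Nn(\theta^{-1}-1)m_1(\mu)\to\tfrac{1-\theta^{-1}}{\tau}m_1(\mu)=:\Psi(\mu)$, the normalising constant $\mathbb{E}_{\mathbb{Q}_1^{(n)}}[\exp(-N(\theta^{-1}-1)L_{11})]$ being equal to $\theta^{N}$, so that its $\tfrac1n\log$ tends to $\tfrac1\tau\log\theta$. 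Since $m_1$ is $\mathcal T^m$-continuous and, by the same change of variables, $\tfrac1n\log\mathbb{E}_{\mathbb{Q}_1^{(n)}}[\exp(\gamma n\Psi_n(\mu_\w\sn))]\to-\tfrac1\tau\log\!\big(1+\gamma(\theta^{-1}-1)\big)$, which is finite for some $\gamma>1$ (any $\gamma<\theta/(\theta-1)$ when $\theta>1$, any $\gamma$ when $\theta\le1$), Varadhan's lemma — together with a routine approximation absorbing the $n$-dependence of $N/n$, $m_1$ being bounded on the relevant compacts — yields the LDP with rate $\mathcal I^L_\meas(\mu)-\Psi(\mu)+\tfrac1\tau\log\theta$, i.e.\ precisely~\eqref{IWL}.

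\emph{Alternative proof.} Under $\mathbb{Q}_1^{(n)}$, the measure $\mu_\w\sn$ is the spectral measure at $e_1$ of $\tfrac1N B_nB_n^\star$ with $B_n$ lower bidiagonal with independent $\chi$-entries, and in~\eqref{zerl1} one has $z^{(n)}_{2k-1}=\tfrac1N(B_n)_{kk}^2$ and $z^{(n)}_{2k}=\tfrac1N(B_n)_{k+1,k}^2$. Left multiplication by $\Sigma_n^{1/2}=\diag(\sqrt\theta,1,\dots)$ rescales only the first row of $B_n$, so $S_n\overset{d}{=}\tfrac1N\widetilde B_n\widetilde B_n^\star$ where $\widetilde B_n$ differs from $B_n$ only in its $(1,1)$ entry, multiplied by $\sqrt\theta$; therefore the $z$-parameters of $\mu_\w\sn$ under $\mathbb{Q}_\theta^{(n)}$ are $(\theta z^{(n)}_1,z^{(n)}_2,z^{(n)}_3,\dots)$. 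Running the projective/independence argument of~\cite{gamboa2011large}, exactly as in part B) of the proof of Theorem~\ref{theo:perturbGUE}, $(\mu_\w\sn)$ satisfies the LDP in $\mathcal M_1^d$ with rate
\begin{equation*}
\mathcal I^S_\coeff(\z)=\tau^{-1}G(z_1/\theta)+\sum_{k\ge2}\tau^{-1}G(z_{2k-1})+\sum_{k\ge1}G(\tau^{-1}z_{2k})=\mathcal I^L_\coeff(\z)+\tau^{-1}\big(G(z_1/\theta)-G(z_1)\big).
\end{equation*}
The elementary identity $G(z_1/\theta)-G(z_1)=(\theta^{-1}-1)z_1+\log\theta$, combined with $z_1=b_1=m_1(\mu)$ and the sum rule~\eqref{srL} ($\mathcal I^L_\coeff(\z)=\mathcal I^L_\meas(\mu)$), turns this into~\eqref{IWL}.

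For (1), minimising $\mathcal I^S_\coeff$: since $G\ge0$ vanishes only at $1$, the unique minimiser is $z_1=\theta$, $z_{2k-1}=1\ (k\ge2)$, $z_{2k}=\tau\ (k\ge1)$, i.e.\ the Jacobi array $\bigl(\begin{smallmatrix}\theta,&1+\tau,&1+\tau,&\cdots\\ \sqrt{\theta\tau},&\sqrt\tau,&\sqrt\tau,&\cdots\end{smallmatrix}\bigr)$ with minimal value $0$, which the free Meixner panorama of Section~\ref{freeM} identifies with $\mu^{L,\theta}$ of~\eqref{muL}. For (2), from $\mathcal I^S(\mu)=\mathcal I^L_\meas(\mu)+\tfrac{\theta^{-1}-1}{\tau}m_1(\mu)+\tfrac1\tau\log\theta\ge0$ with equality only at $\mu^{L,\theta}$ (for which $m_1=\theta$), imposing $m_1(\mu)=\theta$ gives $\mathcal I^L_\meas(\mu)\ge\tau^{-1}G(\theta)$ with equality iff $\mu=\mu^{L,\theta}$. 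The step I expect to be the main obstacle is the bookkeeping in the alternative proof — establishing rigorously, with the right conventions and indexing of the $z_k$'s, that the bidiagonal perturbation multiplies $z_1$ (and nothing else) by $\theta$, and transporting the coefficient-side LDP through this deterministic coordinate change into the $\mathcal T^m$ topology while controlling the atoms/outliers by exponential tightness as in Corollary~\ref{cor42}; the direct proof is more robust, its only delicate point being the Varadhan integrability, and there the single constraint $\theta>0$ suffices.
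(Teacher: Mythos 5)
Your proposal is correct and follows essentially the same two-pronged strategy as the paper — a direct tilting argument from $\mathbb{Q}_1^{(n)}$ to $\mathbb{Q}_\theta^{(n)}$ combined with Varadhan's lemma, and an alternative argument via the Dumitriu--Edelman bidiagonalisation, the sum rule \eqref{srL}, and the identity $G(z_1/\theta)-G(z_1)=(\theta^{-1}-1)z_1+\log\theta$. (You have also silently corrected a typo in the paper: in the displayed bidiagonal matrix $B_n$ the factor $\sqrt\theta$ belongs on the $(1,1)$ entry, as you place it, which is what makes $z_1\mapsto\theta z_1$ — not on the $(2,1)$ entry as printed.)
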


\begin{proof}

We first provide two proofs of \eqref{IWL}.
\medskip

A) Direct proof. Let  $L_n$ be a random $n\times n$ matrix following the $\LUE_N(n)$ distribution (see \eqref{defLUE}), and $\Sigma_n$ a Hermitian positive $n\times n$ matrix. Then, the distribution of $S_n = \Sigma_n^{1/2}L_n\Sigma_n^{1/2}$ is 
\begin{equation*}
\mathbb Q_\theta\sn  (dS) = \frac{1}{\mathcal Z_n}
(\det S)^{N-n}(\det \Sigma)^{-N} \exp \left(- N\tr\!\ \Sigma^{-1}S\right) \ dS. 
\end{equation*}
In our case, $N= \tau^{-1}n$ and $\Sigma_n =\diag(\theta, 1, \dots, 1)$, so that we have $\det \Sigma_n = \theta$ and:
\[
\tr (S_n \Sigma_n^{-1}) = \theta^{-1} (S_n)_{11} + \sum_{k\geq 2} (S_n)_{kk} = (\theta^{-1}-1) (S_n)_{11} + \tr\!\ S_n\,,
\]
which allows us to rewrite,
\begin{align*}
\mathbb Q_\theta\sn  (dS) 
=  
\theta^{-n \tau^{-1}} 
 \exp\left( n \tau^{-1}
(1- \theta^{-1}) S_{11}
 \right)  \  
 dS\,.
\end{align*}
Moreover, since $\mu_\w\sn$ is the spectral measure associated to the pair $(S_n,e_1)$, we have $S_{11} = m_1(\mu_\w\sn)$, which implies that
\begin{align*}
\mathbb Q_\theta\sn( \mu_\w\sn \in d\mu) = \frac{\exp n \Phi(\mu)}{\mathbb Q_1\sn\left(\exp n \Phi(\mu_\w\sn)\right)}\mathbb Q_1\sn( \mu_\w\sn \in d\mu),
\end{align*}
where
\[ \Phi(\mu) = \tau^{-1} (1 - \theta^{-1})m_1(\mu) \]
and
\begin{align}
\label{gammaL}
\mathbb Q_1\sn\left(\exp n \Phi(\mu_\w\sn)\right) = \theta^{n\tau^{-1}}\,.
\end{align} 
In order to apply Varadhan's Lemma, let us check
 the uniform exponential integrability condition. 
From (\ref{gammaL}) we have
\begin{align}
\label{cstwish}
\mathbb Q_1\sn (\exp \varphi n\tau^{-1}m_1(\mu_\w\sn)) = \left(1 - \varphi\right)^{-n\tau^{-1}}\,,
\end{align}
 for all $\varphi <1$. 
Therefore 
\[\frac{1}{n} \log \mathbb Q_1\sn \exp \gamma n \Phi(\mu_\w\sn) = -\tau^{-1} \log(1-\gamma(1-\theta                                             ^{-1}))
\,,\]
for any  $\gamma\in (1, (1-\theta^{-1})^{-1}_+)$,
 the uniform integrability condition is satisfied. The rate function is $\mathcal I^L_\meas -\Phi + \tau^{-1} \log \theta$  
, since (\ref{cstwish}) provides the constant term.
\medskip

B) Alternative proof. 

Fix $n\geq 1$. A consequence of the tridiagonal representation of the $\LUE(n)$ of Dumitriu and Edelman is that $\mu_\w\sn$ is the spectral measure of the pair $(J_n,e_1)$, where $J_n = B_n B_n^\star$ with:
\[  B_n \sim  \frac{1}{\sqrt{2N}}
\left(
\begin{array}{ccccc}
\chi_{2N}      &  &                  &               &                 \\
\sqrt{\theta} \cdot \chi_{2(n-1)} & \chi_{2(N-1)}    &   &               &                 \\
               & \chi_{2(n-2)}  & \chi_{2(N-2)}        &       &                 \\
               &                &  \ddots          & \ddots        &      \\
               &                &                  &  \chi_{2}      & \chi_{2(N-n+1)}
\end{array}
\right). \] 
Here, the matrix $B_n$ is bidiagonal and its coefficients are independent. Note that this corresponds to the usual bidiagonal matrix of the $\LUE(n)$ except for the addition of the multiplicative factor $\sqrt{\theta}$ to the $(1,2)$ coefficient. Using the parameters system (\ref{zerl1}), we deduce that the transformation $L_n \mapsto S_n$ changes the first coefficient $z_1$ into $z_1' =\theta z_1$ and does not change the other parameters. Since the rate function for $z_1$ is $\tau^{-1}G(z)$ with
$G(z) = z-1 - \log z$, the rate function for $\theta z_1$ is $\tau^{-1}G(z/\theta)$. 
Let $\mu$ be a positive measure on $[0,\infty)$ with $\mathfrak{z}$-parameters $(z_i)_{i \geq 0}$. Then, using a projective method and  the independence of the coefficients of $J_n$ as in \cite{gamboa2011large}, 
we see that 
the LDP on the coefficient side is given by:
\begin{align*}
\mathcal I^S_{\coeff}(\z) &=  \tau^{-1}\left[G(z_1/\theta) - G(z_1)\right] +\mathcal I^L_{\coeff}(\z) \\
\lb{forsupL}
&= \tau^{-1}(\theta^{-1} - 1) z_1 + \tau^{-1} \log \theta +\mathcal I^L_{\coeff}(\z)\,.
\end{align*}
But by the sum rule \eqref{srL},
\begin{align}\mathcal I^L_{\coeff}(\z)  =  \mathcal K(\MP\, |\ \mu) + \sum_k \mathcal F_L (E_k^\pm).
\end{align} 
Moreover, $z_1 = b_1 = m_1(\mu)$, so that our random measure satisfies the LDP with rate function
\begin{align}
\mathcal I^W_{\meas}(\mu) =  \mathcal K(\MP\, |\ \mu) + \tau^{-1}(\theta^{-1} - 1) m_1(\mu)+  \tau^{-1} \log \theta + \sum_k \mathcal F_L (E_k^\pm).
\end{align}
\medskip

We now turn to the proof of (1) and (2).

(1) The minimizer of $\mathcal{I}^S$ can be looked from the coefficient side and is given by the following $\mathfrak{z}$-parameters:
\begin{align}
z_1 = \theta, z_{2k-1} =1 \ , \ k \geq 2\ , \
z_{2k} = \tau \ , \ k \ \geq 1\,.
\end{align}
Owing to (\ref{zerl1}), it corresponds to the following Jacobi coefficients:
\begin{align}
\jac(\operatorname{argmin} I^W)= \begin{pmatrix}
\theta, & 1 + \tau,& 1+\tau,& \cdots\\
\sqrt{\theta\tau}, & \sqrt \tau,& \sqrt \tau, &\cdots
\end{pmatrix}.
\end{align}
By Lemma \ref{affine}, we deduce that
\begin{align}
\jac\left(T_{\sqrt{\theta\tau}, \theta}(\operatorname{argmin} I^W)\right)=
\jac(\mu_{b,c})
\end{align}
where
\begin{align}
b = \frac{1+\tau-\theta}{\sqrt{\theta\tau}}\ , \ c= \frac{1-\theta}{\theta}\,.
\end{align}
Coming back to our distribution, we find  the expression given in (\ref{muL}) for $\mu^L$. For $\theta >1$ (resp. $\theta< 1$), it is the free binomial (resp. free Pascal) distribution (see Section \ref{freeM}).

(2) The condition $m_1(\mu) = \theta$ rewrites $\z_1 = \theta$. Combining (\ref{forsupL}) and the sum rule \eqref{srL}, we deduce that
\[\mathcal I^L (\mu) = \mathcal I^S(\mu) + \tau^{-1}(\theta-1- \log \theta)\geq \tau^{-1}(\theta-1- \log \theta)\,,\]
with equality if and only if $\mu = \mu^L$.
\end{proof}

\subsection{Perturbations of Unitary Matrices}\label{subsec:GrossPert}
To the best of our knowledge, there is only one  type of perturbation of unitary matrices which was studied in relation with Verblunsky (for short ``V") coefficients. 
If $U_n \in \mathbb U(n)$ and $e= e_1$ is cyclical, let as usual $(\alpha_k)_{k \geq 0}$ be the V-coefficients of the pair
$(U_n, e)$. 
Now, for any fixed element $e^{\ii \varphi} \in \mathbb T$, we define
\begin{align}
W_n = U_n Q_n \ , \ \text{with} \ Q_n = I_n + (e^{\ii \varphi}- 1) e\langle e, \cdot \rangle \ .
\end{align}
Such a rank-one perturbation has been considered in Sections 1.3.9, 1.4.16, 3.2, and 4.5  of \cite{simon05}, 10.1, A.1.D and A.2.D of  \cite{Simon2}, see also \cite{simon-rk1}.

If $\mu$ is the spectral measure of the pair $(U_n, e)$ let us denote by $\tau_{e^{\ii \varphi}}\mu$ the spectral measure of the pair $(W_n, e)$. A usual tool for the study of a measure $\mu$ on $\mathbb T$ is its Caratheodory transform, which  is the analog of the Stieltjes transform, defined by
\begin{align}
F_\mu(z) = \int_0^{2\pi} \frac{e^{\ii \theta} +z}{e^{\ii \theta} -z} d\mu(\theta)\,.
\end{align}
Conversely, if $d\mu = w(\theta) d\lambda_0(\theta) + d\mu_s$, 
 then
\begin{align}
\label{recover}w(\theta) = \lim_{r\uparrow 1} \Re F_\mu (re^{\ii \theta})\end{align} 
and $\mu_s$ is supported by  $\{\theta : \lim_{r\uparrow 1} \Re F_\mu (re^{\ii \theta}) = \infty\}$
(see \cite{simon05} (1.3.31)). 

The mapping 
 ($\mu \mapsto \tau_{e^{\ii \varphi}}\mu$) 
 gives at the level of Caratheodory transform  :
\begin{align}
\label{alex0}
F_{\tau_{e^{\ii \varphi}}\mu}=  \frac{(1 - e^{-\ii \varphi}) + (1 + e^{-\ii \varphi}) F}{(1 + e^{-\ii \varphi}) + (1 -e^{-\ii \varphi}) F}\,,
\end{align}
 (see \cite{simon05}(1.3.90)), which implies, by the Schur recursion, the remarkable relation:
\begin{align}
\label{changealpha}
\alpha_k (\tau_{e^{\ii \varphi}}\mu)= e^{-\ii \varphi}\alpha_k(\mu) \ , \ (k \geq 0)\,.
\end{align}
When $\varphi$ is varying, it generates the so-called Aleksandrov family of measures. 

\noindent In particular, if $\mathbb R_\varphi\sn$ (resp. $\mathbb R_0\sn$) denotes the distribution of $W_n$ (resp. $U_n$), we have 
\begin{align}
\label{5.23}
\mathbb R_\varphi\sn(\mu_\w\sn \in d\mu)= \mathbb R_0\sn \left(\mu_\w\sn \in d(\tau_{e^{-\ii \varphi}}\mu)\right)\,.
\end{align}
Here is  our theorem which establishes a large deviation principle for the sequence of spectral measures associated to the pairs $(W_n,e)$.

\begin{theorem}
\lb{corGW}
Assume $|\g|\ \leq 1$.
\begin{enumerate}
\item
The family of distribution of random measures $(\mu_\w\sn)$ under $\mathbb G\mathbb W_\g(n)$ satisfies the LDP on $\mathcal M_1(\mathbb T)$, at scale $n$  with good rate function
\begin{align}
\label{rfGW}
\mathcal I^W (\mu) =  \mathcal I^{GW}(\mu)   -\g \Re \left( ( e^{-\ii \varphi}-1)m_1(\mu)
\right)\,,
\end{align}
where $\mathcal I^{GW}$ has been defined in Theorem \ref{appliGW}.
\item  
The unique minimizer of $\mathcal I^W$ is 
$\mu^\varphi = \tau_{e^{-\ii \varphi}}(\GW_{\g})$ and 
\begin{align}
\label{equi34}
d\mu^\varphi(\theta) 
=  \frac{1}{2\pi} \frac{1  +\g \cos \theta}{1  - 2\g \sin \frac{\varphi}{2}\sin \left( \theta - \frac{\varphi}{2}\right) +\g^2 \sin^2 \frac{\varphi}{2}}\ d\theta\,.
\end{align}
\item $\mu^\varphi$ is the unique minimizer of $\mathcal I^{\GW}_\meas$ under the constraint $m_1(\mu) = \frac{\g}{2} e^{\ii \varphi}$.
\end{enumerate}
\end{theorem}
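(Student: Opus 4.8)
The plan is to establish the rate‑function formula \eqref{rfGW} in two ways --- once by tilting (the ``direct'' route used for the Gaussian and Laguerre cases) and once through the Verblunsky coefficients --- and then to deduce (2) and (3) exactly as in those theorems. For the \emph{direct proof of \eqref{rfGW}}, I would first write $\mathbb{R}_\varphi\sn$ as a tilt of $\mathbb{R}_0\sn$. Since $Q_n$ is unitary with $Q_n^{-1}=Q_n^{*}=I_n+(e^{-\ii\varphi}-1)e\langle e,\cdot\rangle$ and the Haar measure is right‑invariant, the density of $W_n=U_nQ_n$ with respect to Haar is $\mathcal Z_n^{-1}\exp\!\big[\tfrac{n\g}{2}\tr(WQ_n^{*}+Q_nW^{*})\big]$. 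Expanding the rank‑one projection gives $\tr(WQ_n^{*}+Q_nW^{*})=\tr(W+W^{*})+2\Re\big((e^{-\ii\varphi}-1)\langle e,We\rangle\big)$, and since $\langle e,W_ne\rangle=m_1(\mu_\w\sn)$ this yields
\[
\mathbb{R}_\varphi\sn(dW)=\exp\!\big(n\g\,\Re((e^{-\ii\varphi}-1)m_1(\mu_\w\sn))\big)\,\mathbb{R}_0\sn(dW).
\]
The key observation is that the normalising integral equals $1$ for every $n$: integrating this identity and substituting the Haar‑invariant $W=VQ_n$ turns $\tr(WQ_n^{*}+Q_nW^{*})$ back into $\tr(V+V^{*})$, so $\mathbb{R}_0\sn\!\big[\exp(n\g\Re((e^{-\ii\varphi}-1)m_1(\mu_\w\sn)))\big]=1$. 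Applying Varadhan's lemma to the LDP of Theorem \ref{appliGW} with $\Psi(\mu)=\g\Re((e^{-\ii\varphi}-1)m_1(\mu))$ --- bounded and continuous on $\mathcal M_1(\mathbb T)$, so the integrability condition is automatic --- gives the LDP with rate $\mathcal K(\GW_\g\,|\,\mu)-\Psi(\mu)+c$, and $c=\lim_n n^{-1}\log 1=0$; this is \eqref{rfGW}.

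For the \emph{alternative proof of \eqref{rfGW}}, I would use the pushforward identity \eqref{5.23}: under $\mathbb{R}_\varphi\sn$ the measure $\mu_\w\sn$ is distributed as $\tau_{e^{\ii\varphi}}$ of the unperturbed spectral measure. The map $\tau_{e^{\ii\varphi}}$ is a homeomorphism of $\mathcal M_1(\mathbb T)$ --- by \eqref{alex0} it acts on Caratheodory transforms by a fractional‑linear map whose denominator has strictly positive real part on $\mathbb D$ (because $\Re F>0$ there), with inverse $\tau_{e^{-\ii\varphi}}$ by \eqref{changealpha} --- so the contraction principle and Theorem \ref{appliGW} give the LDP for $(\mu_\w\sn)$ with rate $\mu\mapsto\mathcal K(\GW_\g\,|\,\tau_{e^{-\ii\varphi}}\mu)$. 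To match this with \eqref{rfGW} I would pass to the coefficient side: by \eqref{changealpha} the Verblunsky coefficients of $\tau_{e^{-\ii\varphi}}\mu$ are $(e^{\ii\varphi}\alpha_k(\mu))_k$, so the sum rule of Theorem \ref{coreasy} gives $\mathcal K(\GW_\g\,|\,\tau_{e^{-\ii\varphi}}\mu)=\mathcal I^{\GW}_\coeff((e^{\ii\varphi}\alpha_k)_k)$. In \eqref{defigw} both $|\alpha_k|^2$ and $\alpha_k\bar\alpha_{k-1}$ are invariant under $\alpha_k\mapsto e^{\ii\varphi}\alpha_k$, so only $\alpha_0$ in the linear part is affected and $\mathcal I^{\GW}_\coeff((e^{\ii\varphi}\alpha_k)_k)=\mathcal I^{\GW}_\coeff(\alpha)-\g\Re((e^{\ii\varphi}-1)\alpha_0)$; since $\alpha_0=\overline{m_1(\mu)}$ (from \eqref{1.2} with $\varphi_0\equiv1$) one has $\Re((e^{\ii\varphi}-1)\alpha_0)=\Re((e^{-\ii\varphi}-1)m_1(\mu))$, and a second application of the sum rule on $\mathcal I^{\GW}_\coeff(\alpha)$ recovers \eqref{rfGW}.

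For (2) and (3): the rate $\mu\mapsto\mathcal K(\GW_\g\,|\,\tau_{e^{-\ii\varphi}}\mu)$ vanishes precisely when $\tau_{e^{-\ii\varphi}}\mu=\GW_\g$, i.e.\ at the unique measure $\mu^\varphi$ with Verblunsky coefficients $\alpha_k=e^{-\ii\varphi}\alpha_k^{\GW}$ (the Aleksandrov transform of $\GW_\g$, by \eqref{changealpha}). For the explicit density \eqref{equi34} I would insert the Caratheodory transform $F_{\GW_\g}(z)=1+\g z$ into \eqref{alex0}, obtaining $F_{\mu^\varphi}(z)=\big(2+(1+e^{-\ii\varphi})\g z\big)/\big(2+(1-e^{-\ii\varphi})\g z\big)$, and read off the boundary real part via \eqref{recover}; writing $\tfrac{\g}{2}e^{\ii\theta}(1\pm e^{-\ii\varphi})$ in half‑angle form produces the numerator $1+\g\cos\theta$ and the denominator $1-2\g\sin\tfrac{\varphi}{2}\sin(\theta-\tfrac{\varphi}{2})+\g^2\sin^2\tfrac{\varphi}{2}$. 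For (3), substituting $m_1(\mu)=\tfrac{\g}{2}e^{\ii\varphi}$ into \eqref{rfGW} makes the perturbation term the constant $\tfrac{\g^2}{2}(1-\cos\varphi)$, so on that slice $\mathcal I^{\GW}_\meas(\mu)=\mathcal I^W(\mu)+\tfrac{\g^2}{2}(1-\cos\varphi)\ge\tfrac{\g^2}{2}(1-\cos\varphi)$, with equality iff $\mu=\mu^\varphi$; and $\mu^\varphi$ satisfies the constraint since $m_1(\mu^\varphi)=\overline{\alpha_0(\mu^\varphi)}=\overline{e^{-\ii\varphi}\g/2}=\tfrac{\g}{2}e^{\ii\varphi}$.

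I expect nothing here to be deep: the only genuinely computational step is extracting \eqref{equi34} from \eqref{alex0}, and the one point deserving care is verifying that the tilt in the direct proof carries no spurious normalising constant --- that is, the identity $\mathbb{R}_0\sn[\exp(n\g\Re((e^{-\ii\varphi}-1)m_1(\mu_\w\sn)))]=1$, where right‑invariance of the Haar measure is the crucial ingredient.
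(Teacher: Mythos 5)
Your proof is correct and follows essentially the same two‑pronged strategy as the paper: the direct route via the Haar‑invariance tilt (with the observation that the normalizing constant is trivial, which the paper uses implicitly by writing the same $\mathcal Z_n$ before and after), and the alternative route via the pushforward identity \eqref{5.23}, the Verblunsky relation \eqref{changealpha}, and the sum rule \eqref{sumrulegwg}. Your computations for parts (2) and (3) — inserting $F_{\GW_\g}(z)=1+\g z$ into \eqref{alex0} and using $\alpha_0(\mu)=\overline{m_1(\mu)}$ — coincide with the paper's (and, incidentally, both you and the paper's proof obtain $\mu^\varphi=\tau_{e^{\ii\varphi}}(\GW_\g)$, which is the correct version compatible with the density \eqref{equi34}; the $\tau_{e^{-\ii\varphi}}$ in the theorem's statement is a sign slip).
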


\proof

\noindent (1) A) Direct  proof.  

From (\ref{GW0}) we deduce
\begin{align}
\mathbb R_\varphi\sn(dW) =  \frac{1}{\mathcal Z_n} \exp  \frac{n\g}{2} \tr  ( WQ_n^{-1} + (WQ_n^{-1})^\star)\ dW\,.
\end{align}
But
\begin{align}\tr\!\ (WQ_n^{-1}) = 
 \tr\!\ W + (e^{-\ii \varphi} -1) W_{11}\ , \ 
 \tr\!\ (WQ_n^{-1})^\star = \tr\!\ W^\star + (e^{\ii \varphi} -1) \bar W_{11}
\end{align}
 so that
\begin{align}
\mathbb R_\varphi\sn(dW) = \exp  n\g\left( \Re\!\ (e^{-\ii \varphi}-1) W_{11})\right)\mathbb R_\varphi\sn(dW)
\end{align}
and since $W_{11} = m_1(\mu_\w\sn)$ we get 
\begin{align}
\label{noconstant}
\mathbb R_\varphi\sn(\mu_\w\sn \in d\mu)) = \exp n\g \Re \left(e^{-\ii \varphi} -1)m_1(\mu)\right)\mathbb R_0\sn(\mu_\w\sn \in d\mu))\,.
\end{align}
This yields (1) by application of Varadhan's lemma without integrability  condition
 since $m_1(\mu_\w\sn)  \in \mathbb D$. Notice that due to the form of  (\ref{noconstant}), there is no constant term in the rate function.
\medskip

(1\noindent ) B) An alternative  proof 

Under $\mathbb G\mathbb W_{\g}(n)$, the rate function for the LDP of the V-coefficients is $\mathcal I_{\coeff}^{\GW}$ given by (\ref{defigw}). 
After a pushing forward by (\ref{5.23}) 
  the new rate function on the coefficient side becomes  
\begin{align}
\lb{537}\mathcal I_{\coeff}^W(\abold) = 
\mathcal I_{\coeff}^{\GW}(e^{\ii \varphi} \abold) = \mathcal I_{\coeff}^{\GW}(\abold)  
- \g \Re \left(\alpha_0 (e^{\ii \varphi}-1)\right)\,.\end{align}
Coming back to the sum rule and using
 $\alpha_0(\mu) = \bar m_1 (\mu)$ we get (\ref{rfGW}).

\noindent(2) From (\ref{5.23}), we have
\begin{align}
\mathcal I^W(\mu) = \mathcal I^{\GW} (\tau_{e^{-\ii \varphi}}(\mu))\,.
\end{align} 
Therefore, the rate function $\mathcal I^W$ has a unique minimum at
\begin{align}
\label{defmuphi}\mu^\varphi := \tau_{e^{\ii \varphi}}(\GW_{\g})\,.\end{align}

The Caratheodory transform of the  equilibrium measure is (\cite{simon05} p.86)
\[F(z) = 1  +\g z\,,\]
so that, using (\ref{defmuphi}), (\ref{alex0}) and (\ref{recover}), 
we find the density 
(\ref{equi34}). Moreover there is no extra mass since $F^\varphi$ has no pole on $\mathbb T$.

We could also have applied formula (3.2.96) in \cite{simon05}, which states that if $\mu= w(\theta)d\lambda_0(\theta) + d\mu_s (\theta)$, then the density $\tilde w$ 
 of $\mu^\varphi$ 
 is given by
\begin{align}
\label{5.29}
\tilde w(\theta) = \frac{w(\theta)}{|\cos\frac{\varphi}{2} + \ii \sin\frac{\varphi}{2} F(e^{\ii \theta})|^2}.
\end{align}
\smallskip

\smallskip

\noindent (3) If $m_1(\mu) =  e^{\ii \varphi}\g/2$, we deduce from (\ref{rfGW}) 
 that
\[\mathcal I^{GW}(\mu) = \mathcal K(\GW_{\g}\, |\ \mu) = \mathcal I^W (\mu) 
+\frac{\g^2}{2}(1 - \cos\varphi) \geq \frac{\g^2}{2}(1-\cos\phi)\,,\]
with equality if and only if $\mu = \mu^\varphi$.

\section{Generalizations}\label{sec:generalization}
In this section, we discuss two possible generalizations of our considerations. The first one concerns the rank-one perturbations of invariant models with general potentials and the second one deals with a matricial version of our results. In each case, for the sake of clarity and to avoid numerous repetitions, we will only treat in details the Hermitian setting.
\subsection{General potential}
\paragraph*{{\bf Additive perturbation.}}
Let $V$ be a convex polynomial potential of even degree $2d$ with positive leading coefficient:

\begin{align}
\label{defV}  V(x) = a_{2d} x^{2d} + \cdots, \quad a_{2d} > 0\,.  
\end{align}
 Let $\mathbb P_0\sn$ be the invariant measure on the set of $n \times n$ Hermitian matrices given by:
\begin{align}
\label{P0V}
\mathbb P_0\sn (dH) = \frac{1}{\mathcal Z_n} \exp \big( -n \tr\!\ V(H)\big)  dH.
\end{align}
Under our assumptions on $V$, this model has a unique equilibrium measure $\mu_V$, which is the almost-sure limit of the empirical spectral measures. Moreover, $\mu_V$ is supported by a single interval $[a_V,b_V]$ and has a density of the form:
\[  \mu_V(d x) = \frac{1}{\pi} r(x) \sqrt{(b_V-x)(x-a_V)} \mathbf{1}_{[a_V,b_V]}(x) dx,  \]
where $r$ is a polynomial of degree $2d-2$ with nonreal zeros (see for example Proposition 3.1 and Equation (2.8) of \cite{johansson1998fluctuations}).

As in Section \ref{subsec:AddHerm}, we are interested in the following additive rank-one perturbation of the model:
\[  W_n := H_n + \theta e_1 e_1^T, \quad H_n \sim \mathbb P_0\sn.  \]
Denoting $\pi = e_1 e_1^T$, we see from (\ref{P0V}) that the distribution of random matrix $W_n$ is :
\begin{align}
\label{PV}
\mathbb P_\theta\sn (dW) := \exp \big( -n  \left[ \tr\!\ V(W- \theta \pi) - \tr\!\ V(W)\right] \big)\  \mathbb P_0\sn (dW) .
\end{align}
Let $\mu_\w\sn$ be the spectral measure associated to the pair $(W_n,e_1)$. In order to compute the distribution of $\mu_\w\sn$, we need the following lemma, whose proof is postponed to the end of this section.
\begin{lemma}
\label{mono}
There exists a polynomial $Q_V$ in $2d$ variables such that, for all Hermitian matrix $M$, 
\[ \tr\!\ V(M - \theta \pi) - \tr\!\ V(M) = Q_V(\theta, M_{11}, (M^2)_{11}, \ldots, (M^{2d-1})_{11})\,.\]
\end{lemma}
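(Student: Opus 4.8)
The statement is a purely algebraic fact about traces of matrix polynomials, so the plan is to expand $\tr\, V(M-\theta\pi)$ monomial by monomial and track how each term depends on $M$. Write $V(x)=\sum_{j=0}^{2d} a_j x^j$, so that $\tr\, V(M-\theta\pi)-\tr\, V(M)=\sum_{j=1}^{2d} a_j\big(\tr (M-\theta\pi)^j-\tr M^j\big)$, and it suffices to treat each power $\tr (M-\theta\pi)^j$ separately for $1\le j\le 2d$. Expanding $(M-\theta\pi)^j$ by noncommutative multinomial expansion gives a sum over words in the letters $M$ and $-\theta\pi$; the all-$M$ word contributes $\tr M^j$, which cancels, so every surviving term contains at least one factor $\pi$.

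The key observation is that $\pi=e_1e_1^{T}$ has rank one, so for any matrices $A,B$ one has $\tr(A\pi B\pi)=(B)_{11}(A)_{11}$ more generally, inserting $\pi=e_1e_1^T$ and using cyclicity of the trace, any word of the form $\pi M^{k_1}\pi M^{k_2}\cdots \pi M^{k_r}$ (with $r\ge 1$ factors of $\pi$, $k_i\ge 0$, $\sum k_i = j-r$) has trace equal to $\prod_{i=1}^{r}(M^{k_i})_{11}$, a product of entries of the form $(M^{k})_{11}$ with $0\le k\le j-1\le 2d-1$ (and $(M^0)_{11}=1$). Each such term also carries a scalar factor $(-\theta)^{r}$. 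Summing over all such words for each $j$ yields a finite sum of monomials in $\theta$ and in the quantities $(M)_{11},(M^2)_{11},\dots,(M^{2d-1})_{11}$, with coefficients coming from the $a_j$ and the multinomial counts. Collecting all of these over $1\le j\le 2d$ defines the polynomial $Q_V$ in $2d$ variables (one slot for $\theta$ and $2d-1$ slots for the entries $(M^k)_{11}$, $1\le k\le 2d-1$), and this $Q_V$ does not depend on $M$ or on $n$, only on $V$. This establishes the claim.

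The routine bookkeeping to carry out is: (i) verify the rank-one identity $\tr(\pi M^{k_1}\cdots \pi M^{k_r}) = \prod_i (M^{k_i})_{11}$ from $\pi = e_1 e_1^T$ and cyclicity; (ii) note that in $(M-\theta\pi)^j$ the highest relevant power of $M$ appearing inside any entry $(M^k)_{11}$ is $k\le j-1\le 2d-1$, which is why only $(M)_{11},\dots,(M^{2d-1})_{11}$ enter; (iii) observe degrees in $\theta$ are bounded by $2d$, so $Q_V$ is genuinely a polynomial. There is no real obstacle here — the only mild subtlety is making the indexing of noncommutative words precise enough to see that the coefficients are universal constants independent of $M$ and $n$, but this is exactly what the multinomial expansion provides. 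One could also phrase the argument more slickly by noting $M\mapsto \tr V(M-\theta\pi)-\tr V(M)$ is, for fixed $\theta$, invariant under conjugations of $M$ fixing $e_1$, hence a function of the moments $(M^k)_{11}$; but the direct expansion is cleanest and automatically yields polynomiality and the degree bound $k\le 2d-1$.
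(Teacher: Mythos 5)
Your proof is correct and follows essentially the same route as the paper's: expand $(M-\theta\pi)^j$ as a noncommutative sum over words in $M$ and $\theta\pi$, then use cyclicity of the trace together with the rank-one structure $\pi=e_1e_1^T$ to reduce each surviving word to a product of scalars $(M^{k_i})_{11}$. The paper phrases the reduction by explicitly enumerating four word shapes (according to whether the word starts/ends with $\pi$) and invoking $\pi^2=\pi$ and $\pi M^k\pi=(M^k)_{11}\pi$; your formulation via the identity $\tr(\pi M^{k_1}\pi\cdots\pi M^{k_r})=\prod_i(M^{k_i})_{11}$ (with $k_i\ge 0$ absorbing the $\pi^2=\pi$ bookkeeping) is a mildly tidier packaging of the same computation, and your degree bound argument matches the paper's implicit one.
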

\begin{remark}
Although a concise formula for $Q_{V}$ in function of $V$ seems out of reach, let us give two simple examples:
\begin{itemize}
\item when $V(x)=x^2$, $Q_V = \theta^2 - 2 \theta M_{11}$,
\item when $V(x) = x^4$, $Q_V = \theta^4 - 4 \theta^3 (M^3)_{11} + 4\theta^2 (M^2)_{11} + 2 \theta^2 M_{11}^2 - 4 \theta (M^3)_{11}$.
\end{itemize}
\end{remark}
\noindent With the notation of Lemma \ref{mono}, we have that
\begin{align*}
\mathbb P_\theta\sn (\mu_\w\sn \in d\mu) = \exp \big( -n Q_{V}(\theta, m_1(\mu_\w\sn), \dots, m_{2d-1}(\mu_\w\sn)) \big) \mathbb P_0\sn (\mu_\w\sn \in d\mu),
\end{align*}
where we recall that $m_i(\mu_\w\sn)$ stands for the $i$-th moment of $\mu_\w\sn$. We also need the following observation, whose proof is postponed to the end of this section.
\begin{lemma}
\label{Vara2}
If $V$ is a convex polynomial of even degree,
\begin{align}\sup_n \frac{1}{n} \log \mathbb E \exp n \gamma\left(\tr\!\ V(M- \theta \pi) - \tr\!\ V(M)\right) < \infty
\end{align}
\end{lemma}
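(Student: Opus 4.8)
\textbf{Proof plan for Lemma \ref{Vara2}.}

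The plan is to reduce the claimed uniform bound to two elementary inputs: first, that $\tr\!\ V(M-\theta\pi)-\tr\!\ V(M)$ is, for each fixed matrix size, a polynomial in the single real variable $M_{11}$ (and lower mixed moments) that grows at most polynomially in the spectral radius of $M$, and second, that under $\mathbb P_0\sn$ the spectral radius has Gaussian-type tails at scale $n$ because $V$ is a convex polynomial of even degree with positive leading coefficient. Concretely, I would first invoke Lemma \ref{mono} to write the difference as $Q_V(\theta, M_{11},\ldots,(M^{2d-1})_{11})$, a fixed polynomial independent of $n$. Since $|(M^j)_{11}|\le \|M\|^j$ where $\|M\|$ is the operator norm, there is a constant $C=C(V,\theta)$ and an integer $p\le 2d-1$ with
\begin{align}
\bigl|\tr\!\ V(M-\theta\pi)-\tr\!\ V(M)\bigr| \le C\bigl(1+\|M\|^{p}\bigr).
\end{align}
This reduces the claim to showing $\sup_n \frac1n\log\mathbb E\exp\bigl(n\gamma C(1+\|M\|^p)\bigr)<\infty$ for $M\sim\mathbb P_0\sn$.

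Next I would control $\mathbb E\exp(n\gamma C\|M\|^p)$ using the large deviation / concentration properties of the invariant ensemble $\mathbb P_0\sn$ with potential $V$. The key point is that for $V(x)=a_{2d}x^{2d}+\cdots$ convex with $a_{2d}>0$, one has for every $L$ large enough a bound of the form $\mathbb P_0\sn(\|M\|\ge L)\le e^{-nc(L)}$ with $c(L)\to\infty$ as $L\to\infty$, and in fact $c(L)\gtrsim L^{2d}$ for large $L$ (this is the standard estimate underlying the LDP for the empirical measure and for the extreme eigenvalues of one-cut polynomial models; see the references on $\mathbb P_0\sn$ already cited, e.g.\ \cite{johansson1998fluctuations}, \cite{agz}, \cite{BGGM}). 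Since $p\le 2d-1<2d$, the linear-in-$\|M\|^p$ exponent is dominated by the $L^{2d}$ decay: splitting $\mathbb E\exp(n\gamma C\|M\|^p)=\sum_{k\ge0}\mathbb E\bigl[\exp(n\gamma C\|M\|^p)\bdone_{\{k\le \|M\|<k+1\}}\bigr]$ and bounding each term by $e^{n\gamma C(k+1)^p}e^{-nc(k)}$, the series converges and is bounded by $e^{nC'}$ uniformly in $n$, giving $\frac1n\log\mathbb E\exp(n\gamma C(1+\|M\|^p))\le C''$ for a constant independent of $n$.

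The main obstacle is making the tail estimate $\mathbb P_0\sn(\|M\|\ge L)\le e^{-nc(L)}$ with $c(L)$ growing fast enough rigorous and citable in the generality of an arbitrary convex even polynomial potential. One clean route is: the joint eigenvalue density is proportional to $\prod_{i<j}(\lambda_i-\lambda_j)^2 \exp(-n\sum_i V(\lambda_i))$, so $\mathbb P_0\sn(\lambda_{\max}\ge L)$ is bounded, after integrating out all but one eigenvalue and using that the Vandermonde factor contributes at most a polynomial-in-$n$ times $e^{n\int\log|x-y|\,d\mu_V(y)}$ type factor, by $e^{-n(V(L)-\text{const})}$ for $L$ beyond the support of $\mu_V$; convexity of $V$ guarantees $V(L)\ge a_{2d}'L^{2d}$ eventually. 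Alternatively one may quote directly the large deviation upper bound for $\lambda_{\max}$ under one-cut polynomial ensembles. I would also note that $\gamma$ here is a fixed real parameter (as it enters Varadhan's lemma), so no uniformity in $\gamma$ is needed, only uniformity in $n$, which is exactly what the argument above delivers. This completes the plan. \qed
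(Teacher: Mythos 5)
Your proposal is correct and follows essentially the same route as the paper: reduce via Lemma \ref{mono} to a polynomial of degree $\le 2d-1$ in the spectral radius, then use the exponential tail bound $\mathbb P_0\sn(\ell > x)\le e^{-nC'V(x)}$ for the spectral radius (the paper cites Pastur--Shcherbina Thm.~11.1.2 and Borot--Guionnet for this) and conclude because $V$ grows like $x^{2d}$ while the exponent only grows like $x^{2d-1}$. The only difference is that you spell out the final integration by splitting the expectation over integer shells, a step the paper leaves implicit as a ``direct consequence'' of the tail bound.
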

This exponential integrability allows an application of  
 Varadhan's Lemma, which gives the following result.
\begin{theorem}
\label{6.3}
The sequence of probability measures $(\mu_\w\sn)_{n \geq 1}$ satisfies a large deviations principle at scale $n$ with good rate function:
\begin{align}
\mathcal I^W (\mu) =\mathcal J(\mu)  - \inf_\nu \mathcal J(\nu)\,,
\end{align}
where
\begin{align}
\mathcal J(\mu) := \mathcal K(\mu_V\, |\ \mu) -Q_{2d}(\theta, m_1(\mu_\w\sn), \dots, m_{2d-1}(\mu_\w\sn))+ \sum_k \mathcal F_H(E_k^\pm)\,.
\end{align}
\end{theorem}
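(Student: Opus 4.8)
The plan is to combine Lemma~\ref{mono}, Lemma~\ref{Vara2}, and the already-known LDP of Corollary~\ref{cor42} (with the general potential $V$ in place of the semicircle case, which gives the rate function $\mathcal K(\mu_V\,|\,\mu) + \sum_k \mathcal F_H(E_k^\pm)$ for the unperturbed invariant model $\mathbb P_0\sn$) via Varadhan's lemma. The structure is exactly the same as in the proof of Theorem~\ref{theo:perturbGUE}, except that the linear statistic $\theta m_1(\mu)$ is replaced by the polynomial functional $Q_V(\theta, m_1(\mu),\dots,m_{2d-1}(\mu))$.

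\medskip

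\textbf{Step 1: Tilting identity.} First I would use Lemma~\ref{mono} to rewrite the density of $W_n$ in \eqref{PV} in terms of the first $2d-1$ moments of the spectral measure $\mu_\w\sn$ of $(W_n,e_1)$, namely $(W_n^j)_{11} = m_j(\mu_\w\sn)$ for $1 \leq j \leq 2d-1$. This yields the tilting relation
\[
\mathbb P_\theta\sn(\mu_\w\sn \in d\mu) = \exp\big(-n Q_V(\theta, m_1(\mu),\dots,m_{2d-1}(\mu))\big)\, \mathbb P_0\sn(\mu_\w\sn \in d\mu),
\]
as stated just before Theorem~\ref{6.3} (note that $Q_V$ automatically incorporates any normalization, since $\mathbb P_\theta\sn$ and $\mathbb P_0\sn$ are both probability measures, so no separate constant is needed).

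\medskip

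\textbf{Step 2: Varadhan's lemma.} The functional $\mu \mapsto Q_V(\theta, m_1(\mu),\dots,m_{2d-1}(\mu))$ is a polynomial in the moments $m_1,\dots,m_{2d-1}$, hence continuous with respect to the topology $\mathcal T^m$ of convergence of moments on $\mathcal M_1^d(\mathbb R)$. The uniform exponential integrability condition $\sup_n \tfrac1n \log \mathbb E\exp\big(-n\gamma Q_V(\theta,\dots)\big) < \infty$ for $\gamma$ in a neighborhood of $1$ is precisely Lemma~\ref{Vara2}. Applying Varadhan's lemma (as in part A of the proof of Theorem~\ref{theo:perturbGUE}) to the LDP for $(\mu_\w\sn)$ under $\mathbb P_0\sn$, with rate $n$ and rate function $\mathcal K(\mu_V\,|\,\mu) + \sum_k \mathcal F_H(E_k^\pm)$ on $\Sr_1(a_V,b_V)$, gives that $(\mu_\w\sn)$ under $\mathbb P_\theta\sn$ satisfies the LDP at scale $n$ with rate function
\[
\mathcal I^W(\mu) = \mathcal J(\mu) - \inf_\nu \mathcal J(\nu), \qquad \mathcal J(\mu) := \mathcal K(\mu_V\,|\,\mu) - Q_V(\theta, m_1(\mu),\dots,m_{2d-1}(\mu)) + \sum_k \mathcal F_H(E_k^\pm),
\]
the subtracted infimum accounting for the (now implicit) normalization constant and ensuring $\inf \mathcal I^W = 0$. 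The good-rate-function property is inherited since $\mathcal J$ differs from a good rate function by a continuous bounded-below function, and the level sets remain compact.

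\medskip

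\textbf{Main obstacle.} The substantive content is entirely in the two lemmas, whose proofs are deferred. Lemma~\ref{mono} requires expanding $\tr\, V(M-\theta\pi) - \tr\, V(M)$ and checking that, because $\pi$ is rank one, every term surviving the subtraction involves only the $(1,1)$-entries of powers of $M$; the cyclic invariance of the trace together with $\pi M^{j_1}\pi M^{j_2}\cdots\pi = M^{j_1}_{11}M^{j_2}_{11}\cdots \pi$-type collapses makes this work, but the bookkeeping is the delicate part. Lemma~\ref{Vara2} needs the convexity of $V$ to control the tail of $\tr\,V(M-\theta\pi) - \tr\,V(M)$: convexity bounds this difference by a linear-growth term in the entries of $M$ (roughly $\theta\,\mathrm{Tr}(V'(M)\pi) = \theta (V'(M))_{11}$ up to lower order), after which the exponential integrability follows from the sub-Gaussian concentration available for $\mathbb P_0\sn$. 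I expect the verification that these two lemmas indeed hold for every convex even polynomial $V$ to be the main technical effort, while the passage to the LDP via Varadhan is routine given the template already established for the Gaussian case.
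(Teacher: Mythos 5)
Your proposal follows essentially the same route as the paper: express the Radon--Nikodym density of $\mathbb P_\theta^{(n)}$ with respect to $\mathbb P_0^{(n)}$ through Lemma~\ref{mono} as a moment functional, verify continuity in the moment topology and uniform exponential integrability (Lemma~\ref{Vara2}), and conclude by Varadhan's lemma starting from the LDP for the unperturbed model. Your remarks about normalization (the tilting factor $e^{-nQ_V}$ carries no extra constant because $W\mapsto W-\theta\pi$ preserves Lebesgue measure, so the subtracted $\inf_\nu\mathcal J(\nu)$ is just there to renormalize the rate function to have zero infimum) are correct and indeed implicit in the paper. The only small imprecision is in your sketch of Lemma~\ref{Vara2}: the paper does not reduce to a linear bound via convexity of $V$ but instead bounds $|\tr V(M-\theta\pi)-\tr V(M)|$ by $C\,\ell^{2d-1}$ with $\ell$ the spectral radius, then uses the rough tail estimate $\mathbb P(\ell>x)\leq e^{-nC'V(x)}$ together with the assumption \eqref{defV} on the growth of $V$; but as this lives in your ``main obstacle'' speculation rather than in the actual argument, it does not affect the correctness of the overall structure.
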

We now turn to the proofs of Lemmas \ref{mono} and \ref{Vara2}.

\begin{proof}[Proof of Lemma \ref{mono}]

It is enough to check the assertion when $V$  a monomial $V(x) = c x^r$.
The matrix $(M- \theta \pi)^r$ is the sum of $2^r$ products of elements which are $M$ or $\theta\pi$. Since $\pi$ is a projection, $\pi^k = \pi$ for every $k \geq 1$, hence the products involved in $(M+ \theta \pi)^r - M^r$ are of the form 
\begin{enumerate}
\item
$ \theta^j \pi M^{a_1}\pi \cdots \pi M^{a_i}\pi$
\item
$ \theta^j \pi M^{a_1}\pi \cdots\pi M^{a_i}$
\item
$ \theta^j M^{a_1}\pi \cdots\pi M^{a_i}$
\item
$ \theta^j M^{a_1}\pi \cdots\pi M^{a_i}\pi$\,.
\end{enumerate}
It is clear that the first expression is exactly $\theta^j (M^{a_1})_{11}\cdots (M^{a_i})_{11}$. The three other ones can be reduced to the first type:  
since $\tr AB = \tr BA$, we can write
\begin{itemize}
\item
$ \tr (\pi M^{a_1}\pi ...\pi M^{a_i}) = \tr (\pi^2M^{a_1}\pi \cdots \pi M^{a_i})=\tr (\pi M^{a_1}\pi \cdots\pi M^{a_i}\pi)$
\item
$\tr (M^{a_1}\pi ...\pi M^{a_i})= \tr  (M^{a_1+ a_i}\pi \cdots\pi)= \tr  (M^{a_1+ a_i}\pi \cdots\pi^2)= \tr (\pi  (M^{a_1+ a_i}\pi ...\pi)$
\item
$\tr (M^{a_1}\pi \cdots\pi M^{a_i}\pi) = \tr (M^{a_1}\pi \cdots\pi M^{a_i}\pi^2) = \tr (\pi M^{a_1}\pi \cdots\pi M^{a_i}\pi)$
\end{itemize}
and since $\pi M^k\pi = (M^k)_{11} \pi$, we get the result.
\end{proof}

\begin{proof}[Proof of Lemma \ref{Vara2}]

Let us denote $\ell := \max \{|\lambda_{\max}|, |\lambda_{\min}|\}$. Combining Lemma \ref{mono} and the fact that for all $k \geq 0$, $(M^k)_{11} \leq \ell^k$, we deduce that $\tr\!\ V(M+ \theta \pi) - \tr\!\ V(M)$ is bounded by $C \ell^{2d-1}$, for some constant $C$ only depending on $V$ and $\theta$.

Therefore, it is enough to check that $\sup_n n^{-1} \log \mathbb E \exp C n \ell^{2d-1} <  \infty$. This fact is a direct consequence of the following rough large deviations estimate :
there exists $C' > 0$ such that for every $x> 0$ large enough $\mathbb P(\ell > x) \leq e^{-nC' V(x)}$
(see \cite{pastur2011eigenvalue} Theorem 11.1.2, a precise rate function is given in \cite{borotgui2013} Prop. 2.1).
The proof is ended recalling that $V$ is given by (\ref{defV}).
\end{proof}

\subsection{Matricial spectral measures}
Let $E$ be $\T$ or $\R$ and $r$ a positive integer.. 
A matrix measure $\Sigma=(\Sigma_{i,j})$ of size $r\times r$   on $E$
 is a matrix of signed complex measures, such that for any Borel set 
$A\subset E$, $\Sigma(A)= (\Sigma_{i,j}(A))\in \mathcal H_p$ is (Hermitian and) non-negative definite. A matrix measure on $E$ 
 is  a probability matrix measure normalized, if $\Sigma(E)=\mathbf 1$. We denote by $\mathcal{M}_{r,1}(E)$ the set of  $r\times r$ probability matrix measures with support in $T \subset E$.
 
Given a matrix $M$ and a $r$-tuple $(u_1,\ldots,u_r)$ of unit vectors that are orthogonal, 
 we define the matricial spectral measure $\nu^M := (\nu_{ij}^M)_{1 \leq i,j \leq r}$ as the only element of $\mathcal M_{r,1}$ such that, for all $i,j \in \{1, \ldots, r\}$ and all
 $ k \geq 0$ 
 \[ \langle u_i, M^k u_j \rangle = \int_E x^k \mathrm{d} \nu_{ij}^M(x)\,.   \]
In other words
\[  \left((M^k)_{ij}\right)_{1 \leq i,j \leq r} = \left(  \int x^k \mathrm{d}\nu_{ij}^M(x) \right)_{1 \leq i,j \leq r}.  \]
We will denote by $\mathbf{m}_k = \mathbf{m}_k ( (\nu_{ij}^M))$ the right-hand side of the above equality.
Note that when $r=1$, we retrieve the previously considered spectral measure associated to the pair $(M,u_1)$. Interestingly, our method also applies to the study of matricial spectral measures of perturbations of the invariant models described in Section \ref{sec:notation}. In the following, we will always assume that $u_1, \ldots, u_r$  are the first $r$ vectors $e_1, \ldots, e_r$ of the canonical basis. Analogously to Sections \ref{subsec:AddHerm}, \ref{subsec:MultPert} and \ref{subsec:GrossPert}, our results rely on former large deviations principles obtained for the unperturbed models.

In order to state them, we first need to introduce 
some notations. Let $\Sigma \in \mathcal{M}_{r,1}(E)$ be a \emph{quasi-scalar} measure, which means that   
$\Sigma = \sigma \cdot \bdone $ 
 where $\sigma \in \mathcal{M}_{1}(E)$ is a scalar probability measure
 and $\bdone$ is the $r\times r$ identity matrix. 
 Let $\mubold \in \mathcal{M}_{r,1}(E)$. 
 We say that 
 $\mubold$ 
  is absolutely continuous (a.c. for short) with respect to $\sigma$ $(\mubold\ll \sigma)$ if each entry of $\mubold$ is a.c. with respect to $\sigma$. In this case there is a 
 Lebesgue decomposition
\[\mubold(dx) = \hbold (x) \sigma(dx) +  \mubold_s(dz)\,,\]
where  $\hbold$ is Hermitian nonnegative and $\mubold_s$ is singular with respect to $\sigma$, i.e. nonzero only on a set of $\sigma$ measure zero
Then, we define  the notion of Kullback-Leibler divergence
\begin{equation}
\label{kullback}
\mathcal K(\Sigma\!\  |\!\  \mubold) := - \int_{E} \log\det \hbold (x)\ \sigma (dx)\,.
\end{equation}
if $\log \det \hbold \in L^1(\sigma)$ and $\infty$ otherwise. 
We remark that it is possible to rewrite the above quantity in the flavour of Kullback-Leibler information (or relative entropy) with the notation of \cite{mandrekar} or \cite{robertson}.

Finally, we define $\boldsymbol{\Sr} = \boldsymbol{\Sr}(\am,\ap)$ the set of all bounded matricial measures $\mubold$ of size $r \times r$ such that
\begin{itemize}
\item[(i)] $\operatorname{supp}(\mubold) = J \cup \{E_i^-\}_{i=1}^{N^-} \cup \{E_i^+\}_{i=1}^{N^+}$, where $J\subset I= [\am,\ap]$, $N^-,N^+\in\N\cup\{\infty\}$ and 
\begin{align*}
E_1^-< E_2^-<\dots <\am \quad \text{and} \quad E_1^+> E_2^+>\dots >\ap .
\end{align*}
\item[(ii)] If $N^-$ (resp. $N^+$) is infinite, then $E_j^-$ converges towards $\am$ (resp. $\lambda_j^+$ converges to $\ap$).
\end{itemize}
Such a matricial measure $\mubold$ can always be written as
\begin{align}
\mubold = \mubold_{|I} +  \sum_{i=1}^{N^+} \boldsymbol{\Gamma}_i^+ \delta_{E_i^+} + \sum_{i=1}^{N^-} \boldsymbol{\Gamma}_i^- \delta_{E_i^-},
\end{align}
for some $r \times r$ matrices $\boldsymbol{\Gamma}_i^{\pm}$. We also introduce 
\[ \boldsymbol{\Sr}_1= \boldsymbol{\Sr}_1(\am,\ap):=\{\mu \in \boldsymbol{\Sr} |\, \mu(\R)= \mathbf{1} \}, \] 
and endow $\boldsymbol{\Sr}_1$ with the weak topology and the corresponding Borel $\sigma$-algebra.

In the unitary case, there is a corresponding framework. We omit to give details for simplicity. 
\bigskip
\paragraph*{{\bf The Hermitian case.}} For all $n \geq r$, let $X_n$ be a $\GUE(n)$ random matrix. 
 Let also $A_n$ be a deterministic Hermitian matrix having all of its entries equal to zero except for the $r \times r$ top-left block which is given by some Hermitian matrix $\mathbf{\Theta}$. We are interested in the matricial spectral measure of the deformed matrix:
\[ W_n := \frac{X_n}{\sqrt n} + A_n.   \]
The distribution $\mathbb P_{\mathbf{\Theta}}^{(n)}$ of $W_n$ is given by:
\[ \mathbb P_{\mathbf{\Theta}}^{(n)}(dW) = \frac{1}{\mathcal Z_n} \exp \left( -\frac{n}{2} \tr\!\ \big[ ( W - A_n)( W - A_n)^\star\big]  \right)  dW.  \]
Let $\mathbf{\mubold_\w}\sn$ be the matricial spectral measure associated to $W_n$ and the $r$-tuple $(e_1, \ldots,e_r)$. Since
\begin{align*}
\tr\!\ ( W - A_n)( W - A_n)^\star 
&= \tr\!\ (W W^\star) - 2 \tr\!\ (A_n W) + \tr\!\ (A_n A_n^\star) \\
&= \tr\!\ (W W^\star) - 2\!\ \tr ( \mathbf{\Theta} \mathbf{m}_1) + \tr\!\ \left( \boldsymbol{\Theta} \boldsymbol{\Theta}^\star \right),
\end{align*}
we deduce that
\begin{equation} \label{eq:SpecMesMatrHerm}
\mathbb P_{\mathbf{\Theta}}^{(n)}( \mathbf{\mubold_\w}\sn \in d \mathbf{\mubold}) = \frac{\exp n \Psi( \mubold) }{\mathbb{E}_0[ \exp n \Psi( \mathbf{\mubold_\w}\sn)]} \mathbb P_{\mathbf{0}}^{(n)}( \mathbf{\mubold_\w}\sn \in d \mathbf{\mubold}), 
\end{equation}
where $\mathbf{0}$ is the $r \times r$ matrix having all its coefficients equal to zero and where
\[  \Psi( \mubold ) =  \tr \left(   \mathbf{\Theta} \mathbf{m}_1( \mubold) \right).  \]
Under $ \mathbb P_{\mathbf{0}}^{(n)}$, it is known (see for example \cite{GaNaRomat}) that the sequence $( \mathbf{\mubold_\w}\sn)_{n \geq r}$ satisfies a large deviations principle at speed $n$ and with good rate function
\[ \mathcal{I}^X(\mathbf{\mubold}) =
\left\{
\begin{array}{lr} 
\mathcal{K}(SC \cdot \mathbf{1};\mathbf{\mubold}) + \sum\limits_{k \geq 1} \mathcal{F}_H(E_k^{\pm}) & \text{if } \mubold \in \boldsymbol{\Sr}_1(-2,2), \\
\infty & \text{otherwise}.
\end{array}
\right.  \]
Besides, note that for every $\gamma >0$, 
\[ \frac{1}{n} \log \mathbb{E}_{\mathbf{0}} \left[ \exp \tr \left(\mathbf{\Theta} \mathbf{m}_1  \right)   \right] = \frac{\gamma^2}{2} \tr \left( \boldsymbol{\Theta} \boldsymbol{\Theta}^\star \right).   \]
Therefore, applying Varadhan's Lemma to \eqref{eq:SpecMesMatrHerm}, we obtain the following analog of Theorem \ref{theo:perturbGUE}.

\begin{theorem}
\label{6.4}
The sequence $( \mathbf{\mubold_\w}\sn)_{n \geq r}$ satisfies a large deviations principle at speed $n$ and with good rate function $\mathcal{I}^W$ given by
\begin{align*}
\mathcal{I}^W(\mubold)= \begin{cases}
\mathcal{K}(SC \cdot \mathbf{1}\, |\ \mathbf{\mubold}) + \sum\limits_k \mathcal{F}_H(E_k^{\pm}) - \tr\!\ (  \mathbf{\Theta} \mathbf{m}_1  ) + \frac{1}{2} \tr\!\ ( \boldsymbol{\Theta} \boldsymbol{\Theta}^\star ) & \text{if } \mubold \in \boldsymbol{\Sr}_1(-2,2), \\
\infty & \text{otherwise}.
\end{cases}
\end{align*}
\end{theorem}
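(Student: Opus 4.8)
The plan is to obtain Theorem~\ref{6.4} from the large deviations principle for the \emph{unperturbed} matricial spectral measures together with the tilting identity \eqref{eq:SpecMesMatrHerm}, by an application of Varadhan's lemma — exactly the scheme of the direct proof of Theorem~\ref{theo:perturbGUE}, now carried out on the space $\mathcal M_{r,1}(\R)$ of $r\times r$ probability matrix measures. So the three ingredients to assemble are: (i) the base LDP under $\mathbb P_{\mathbf 0}^{(n)}$ with good rate function $\mathcal I^X$; (ii) continuity of the tilt functional $\Psi(\mubold)=\tr(\mathbf{\Theta}\,\mathbf{m}_1(\mubold))$; and (iii) the uniform exponential integrability condition of Varadhan's lemma, together with the identification of the normalizing constant.

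For (i), I would invoke the matricial analogue of Corollary~\ref{cor42}: under $\mathbb P_{\mathbf 0}^{(n)}$ the sequence $(\mathbf{\mubold_\w}\sn)_{n\ge r}$ satisfies the LDP at speed $n$ with good rate function $\mathcal I^X$, and this holds not only for the weak topology but for the finer topology of convergence of moments on a suitable set $\mathcal M_{r,1}^d(\R)$ of matrix measures determined by their moments — the content of \cite{GaNaRomat} — the required exponential tightness coming from the tail estimate $\mathbb P(\ell>x)\le e^{-nC'V(x)}$ on $\ell=\max\{|\lambda_{\max}|,|\lambda_{\min}|\}$ of a $\GUE(n)$ matrix (here $V(x)=x^2/2$). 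For (ii), note that in this moment topology the map $\mubold\mapsto\mathbf m_1(\mubold)$ is continuous, hence so is $\Psi$; moreover $\Psi$ is real-valued, since $\mathbf m_1$ is Hermitian and $\mathbf\Theta$ is Hermitian, so that $\tr(\mathbf\Theta\,\mathbf m_1)\in\R$. For (iii), I would use the exact Gaussian computation already displayed before the theorem: under $\mathbb P_{\mathbf 0}^{(n)}$ the quantity $\tr(\mathbf\Theta\,\mathbf m_1(\mathbf{\mubold_\w}\sn))$ is a centered real Gaussian linear combination of the entries of $X_n/\sqrt n$, whence for every $\gamma\in\R$
\[
\frac1n\log\mathbb E_{\mathbf 0}\!\left[\exp\!\big(\gamma n\,\tr(\mathbf\Theta\,\mathbf m_1(\mathbf{\mubold_\w}\sn))\big)\right]=\frac{\gamma^2}{2}\,\tr(\boldsymbol{\Theta}\boldsymbol{\Theta}^\star).
\]
In particular this limit is finite for some $\gamma>1$, so the moment hypothesis of Varadhan's lemma holds; and taking $\gamma=1$ gives $\tfrac1n\log\mathbb E_{\mathbf 0}[\exp n\Psi(\mathbf{\mubold_\w}\sn)]\to\tfrac12\tr(\boldsymbol{\Theta}\boldsymbol{\Theta}^\star)$, i.e. the normalizing constant in \eqref{eq:SpecMesMatrHerm}.

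Putting these together, Varadhan's lemma (\cite{demboz98} Th.~4.3.1, or \cite{deuschel2001large} Ex.~2.1.24) applied to \eqref{eq:SpecMesMatrHerm} gives that $(\mathbf{\mubold_\w}\sn)_{n\ge r}$ satisfies the LDP at speed $n$ with good rate function $\mathcal I^X(\mubold)-\Psi(\mubold)+\tfrac12\tr(\boldsymbol{\Theta}\boldsymbol{\Theta}^\star)$; the additive constant replaces the generic $-\inf_\nu(\mathcal I^X-\Psi)$, and automatically makes the infimum of the resulting rate function equal to $0$. Recalling that $\mathcal I^X(\mubold)=\mathcal K(SC\cdot\mathbf 1\,|\ \mubold)+\sum_k\mathcal F_H(E_k^\pm)$ on $\boldsymbol{\Sr}_1(-2,2)$ and $+\infty$ otherwise (while $\Psi$ is finite on all of $\mathcal M_{r,1}^d(\R)$), this is precisely the stated $\mathcal I^W$. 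I expect no real obstacle here; the only point demanding care — and the reason the matricial statement is not entirely formal — is that one must run Varadhan's lemma in a topology fine enough for $\Psi$ to be continuous yet coarse enough (or equipped with exponential tightness) for the base LDP to persist, i.e. the matricial analogue of Corollary~\ref{cor42}, which is exactly what \cite{GaNaRomat} supplies.
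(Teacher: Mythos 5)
Your proposal is correct and follows essentially the same route as the paper: the tilting identity \eqref{eq:SpecMesMatrHerm}, the base LDP from \cite{GaNaRomat} under $\mathbb P_{\mathbf 0}^{(n)}$, the exact Gaussian computation of $\tfrac1n\log\mathbb E_{\mathbf 0}[\exp\gamma n\Psi]=\tfrac{\gamma^2}{2}\tr(\boldsymbol\Theta\boldsymbol\Theta^\star)$, and Varadhan's lemma. You spell out the two technical points the paper leaves implicit (continuity of $\Psi$ in the moment topology, and verification of the uniform integrability hypothesis), but these are exactly what the paper's cited references supply, so there is no substantive divergence.
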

Let us finally describe the unique minimizer of $\mathcal{I}^W$. First, we claim that, as in the scalar case described in Section \ref{sec:recapOPRLOPUC}, there exists a one-to-one correspondence between matricial measures $\mubold$ and sequences of $r \times r$ matrices $(A_n)_{n \geq 1}$ and $(B_n)_{n \geq 1}$ such that the matrices $B_i$'s are Hermitian positive definite. Using the matricial sum rule (Th. 2.1 in  \cite{GaNaRomat}), the good rate function can be rewritten, when $\mubold \in \boldsymbol{\Sr}_1(-2,2)$:
\[ \mathcal{I}^W(\mubold) = \frac{1}{2} \tr\!\ \big[( B_1 - \boldsymbol{\Theta})( B_1 - \boldsymbol{\Theta}) ^\star\big]  + \frac{1}{2} \sum\limits_{n \geq 2} \tr\!\ (B_n B_n^\star) + \sum\limits_{n \geq 1} G(A_n A_n^\star).  \]
The unique minimizer $\mubold_{SC,\boldsymbol{\Theta}} = \operatorname{argmin} \mathcal I^W$ can therefore be described by its matricial Jacobi coefficients:
\[ \jac( \mubold_{SC,\boldsymbol{\Theta}} )= \begin{pmatrix}
\boldsymbol{\Theta}, & \mathbf{0},&  \mathbf{0},& \cdots\\
\mathbf{1}, & \mathbf{1},& \mathbf{1}, &\cdots
\end{pmatrix}. \]  
In order to obtain an explicit formula, we {use the matricial Stieltjes transform of $\mubold_{SC,\boldsymbol{\Theta}}$, defined by
\[  \mathbf{G}(z) := \int \frac{\mathrm{d}\mubold_{SC,\boldsymbol{\Theta}}(x)}{x - z \mathbf{1}}.  \]
By \cite[Theorem 4.3.3]{Simon-newbook}, it satisfies the following equation:
\begin{equation} \label{eq:StieltMatricialRel}
\mathbf{G}(z) = \left(\boldsymbol{\Theta} - \omega(z) \mathbf{1}\right)^{-1},
\end{equation}
where $\omega$ (called the subordination function) is here
\begin{align}
\omega(z) = z+G_{SC}(z),
\end{align}}
where $G_{SC}(z) = \frac{1}{2} ( z - \sqrt{z^2 - 4})$ is the Stieltjes transform of the semi-circle law. 

Since the absolutely continuous part of $\mubold_{SC,\boldsymbol{\Theta}}$ is given by
\[  \frac{d \mubold_{SC,\boldsymbol{\Theta}}(x)}{dx} = \lim\limits_{t \rightarrow 0^+} \frac{1}{\pi} \Im \mathbf{G}(x+it),  \]
it is easy to deduce that
\[ \frac{d \mubold_{SC,\boldsymbol{\Theta}}(x)}{dx} = \frac{\sqrt{(4-x^2)_+}}{ 2\pi} \left(\boldsymbol{\Theta}\boldsymbol{\Theta}^\star + \mathbf{1} - x \boldsymbol{\Theta}  \right)^{-1}.  \]
Moreover, $\mubold_{SC,\boldsymbol{\Theta}}$ has an atom at each pole of $\mathbf{G}$ and the mass of this atom is the corresponding residue. Thanks to \eqref{eq:StieltMatricialRel}, the poles of $\mathbf{G}$ corresponds to the reals $x$ such that 
\[\det (\boldsymbol{\Theta} -  \omega(x)\mathbf{1})=0\,.\] For simplicity, let us assume from now on that $\Theta$ has distinct eigenvalues $\theta_1, \ldots, \theta_r$, the adaptation in the general case being straightforward. Let $U$ be the matrix whose columns are the eigenvectors of $\boldsymbol{\Theta}$. Then, $\boldsymbol{\Theta} = U D U^\star$ with $D = \diag( \theta_1,\ldots, \theta_r)$, and we deduce that
\[  \mathbf{G}(z) = U \left(D - \omega(z)  \mathbf{1}\right)^{-1} U^\star.  \]
We now use the following well-known fact about the  function $\omega)$:
\begin{itemize}
\item if $|\theta| \leq 1$, there is no real $x$ such that $\omega(x) = \theta$;
\item if $|\theta|>1$, there exists exactly one real $x_\theta = \theta + \frac{1}{\theta}$ such that $|x_\theta|>2$ and $\omega(x_\theta) = \theta$. Moreover, $1/\omega'(x_\theta) = 1 - \frac{1}{\theta^2}$.
\end{itemize}
Therefore, the poles of $\mathbf{G}$ are in one-to-one correspondence with the eigenvalues $\theta_i$ of $\boldsymbol{\Theta}$ satisfying $|\theta_i|>1$, and each of this pole has a residue given by $U (1 - \frac{1}{\theta_i^2}) e_i e_i^T U^\star$. Hence, we have proved that:
\begin{align}\notag 
\mubold_{SC,\boldsymbol{\Theta}} (dx) &= \frac{\sqrt{(4-x^2)_+}}{ 2\pi} \left(\boldsymbol{\Theta}\boldsymbol{\Theta}^\star + \mathbf{1} - x \boldsymbol{\Theta}  \right)^{-1} dx \\ 
\label{muboldtheta}
&+  \sum\limits_{i=1}^r  \left( 1 - \frac{1}{\theta_i^2} \right) Ue_i e_i^T U^\star 1_{|\theta_i|>1} \delta_{ \theta_i + \frac{1}{\theta_i} }(dx). 
\end{align}
It can also be written as follows:
\[\mubold_{SC,\boldsymbol{\Theta}} 
 = U \diag(\mu_{\SC, \theta_1}, \cdots, \mu_{\SC, \theta_r})U^*\,.\]

\paragraph*{{\bf Application.}} In Section 5.1, we have considered the spectral measure $\mu_\w\sn$ of the pair $(W_n , e)$ for a rank-one perturbation $\theta uu^*$ when $u=e$. The matricial theory allows to consider the case $u\not=e$.  
 Assume $\langle u, e\rangle = \cos \varphi\not= \pm 1$ and consider the following orthonormal basis. 
We set  $e_1 = e$, $w = u -\langle u, e\rangle e$,  $e_2 = \frac{w}{\Vert w\Vert}$ and we complete by $e_3, \cdots, e_n$. 
We can now consider the random matrix
\[W_n = \frac{X_n}{\sqrt n} + A_n\,,\]
where $A_n$ is a  rank-one deterministic Hermitian matrix, having all its entries equal to zero except for the $2 \times 2$ top-left block matrix which is  $\Tbold = \theta \Rbold$ with
\[\Rbold 
= \begin{pmatrix}\cos^2 \varphi& \sin \varphi \cos \varphi\\ \sin \varphi \cos \varphi& \sin^2\varphi 
\end{pmatrix}\,.\]
$\Rbold$ is a projection and then
\begin{align}\notag
\Tbold\Tbold^* + \bdone -z\Tbold &= \bdone - (\theta z- \theta^2)\Rbold\\
\label{inverse}
(\Tbold\Tbold^* + \bdone -z\Tbold)^{-1} 
&= \bdone + \left((\theta^2 + 1 - \theta z)^{-1} - 1\right)\Rbold\,,
\end{align}
as soon as $\theta^2 + 1 - \theta z \not=0$. Moreover
\[U= \begin{pmatrix}\cos \varphi &- \sin\varphi\\
\sin\varphi & \cos\varphi
\end{pmatrix}\,.\]
The (scalar) spectral measure $\mu\sn$ of the pair $(W_n, e)$  is exactly $\left(\mubold_\w\sn\right)_{11}$. The equilibrium measure 
is $\left(\mubold_{\SC, \Tbold}\right)_{11}$ and then, from (\ref{muboldtheta})  and (\ref{inverse})  
\begin{align}
  \left(\mubold_{\SC, \Tbold}\right)_{11} =  (\sin^2\varphi) \SC +  (\cos^2 \varphi) \mu_{\SC, \theta}\,.\end{align}
i 
As seen above,  $\mubold_\w\sn$ satisfies the LDP in the scale $n$ and then, 
 by the contraction principle, $\mu\sn$  satisfies the LDP  with rate function
\[\mu\in \mathcal M_1(\mathbb R) \mapsto  \inf \{\mathcal I^W(\mubold) ; \mubold \in \mathcal M_{2,1}(\mathbb R) , (\mubold)_{11} = \mu\}\,.\]
where $\mathcal I^W$ was defined in Theorem \ref{6.4}
but we didn't find an expression of this rate function.
\medskip

\paragraph*{{\bf The Gross-Witten case.}} The role of $e^{\ii \varphi}$ is now played by a unitary $r\times r$ operator.
In the sequel, we will omit the subscript $n$ to simpifly the notation. 
 As in  (4.5.10) in \cite{simon05} we consider 
\begin{align}
W = UQ \ , \  Q= 1+ (\Lambda - 1) P\,,
\end{align}
where $P$  is the projection on $\mathcal H_r = \hbox{Vect}\ \{ e_1, \dots, e_r\}$
 and $\Lambda$ is a unitary operator acting on $\mathcal H_r $. 
Notice that
\[Q^{-1} = Q^* = 1+ (\Lambda^* -1) P\,.\]
In other words,
\[Q = \Lambda\oplus I_{n-r}\ , \ Q^{-1} = Q^* =  \Lambda^*\oplus I_{n-r}\,.\]
If $\mu$ is the spectral measure of the pair $(U ; e_1, \dots, e_r)$, let us denote by $\tau_\Lambda\mu$ the spectral measure of the pair $(W; e_1, \dots, e_r)$. 
We have the matricial version of (\ref{alex0})
(Theorem 4.5.6 in \cite{simon05})
\begin{align}
\notag
F_\Lambda &= \left[(\bdone + \Lambda) - F (\bdone-\Lambda)\right]^{-1}\left[-(\bdone - \Lambda) + F (\bdone + \Lambda)\right]
\,.
\end{align}
which gives, via the Schur recursion
\begin{align}
\label{changealphaM}
\abold_k(\tau_\Lambda\mu) = \Lambda^*\abold_k(\mu) \ , \ (k \geq 0)\,.
\end{align}

To compute 
the distribution of $W$, let us denote by $W^\uparrow_r$ the $r\times r$ upper left corner of $W$ and by $W^\downarrow_{n-r}$  the $(n-r)\times (n-r)$ lower right corner of $W$ so that
\begin{align}
\label{6.17}
\mathbb P_\Lambda\sn (dW) = \frac{1}{\mathcal Z_0\sn}\exp \frac{n\g}{2} \tr\! \left (WQ^{-1}+ (WQ^{-1})^*\right) dW\,.
\end{align}
Since
\begin{align}\notag
\tr (WQ^{-1}) & 
= \tr\!\ W+ \tr\!\ \left(W_r^\uparrow (\Lambda^* - \bdone)\right)\\
\tr\left(WQ^{-1} + (WQ^{-1})^*\right)&= \tr\!\ ( W + W^*) + 2 \Re\tr\!\ \left(W_r^\uparrow (\Lambda^* - \bdone)\right)\,,
\end{align}
(\ref{6.17}) may be written
\begin{align}
\mathbb P_{\Lambda}\sn (dW) =\exp n\g \Re\tr\!\ \left(W_r^\uparrow (\Lambda^* - \bdone)\right)\ 
\mathbb P\sn (dW) \,.
\end{align}
Let $\mubold_\w\sn$ be the matricial spectral measure of $(W ; e_1, \dots, e_r)$.  Since $W_r^\uparrow = \abold_0^* = \mbold_1(\mu_\w\sn)$, we get
\begin{align}
\mathbb P_{\Lambda}\sn (\mu_\w\sn \in d\mu) = \exp n\g \Re\tr\!\ \left(\mbold_1(\mu)(\Lambda^*  - \bdone)\right)\ 
\mathbb P_{\bdone}\sn (\mu_\w\sn \in d\mu)\,. 
\end{align}
Under $\mathbb P_{\bf 1}\sn$, it is known {(\cite{GNROPUC}) that the sequence $(\mubold _\w\sn)_{n \geq r}$, satisfies an LDP  at speed $n$.
 If $|\g| \leq 1$, the rate function is
\begin{align}
\mathcal I^{\GW} (\mubold) = \mathcal K(\GW_{\g}\cdot \bdone\, |\ \mubold)\,.
\end{align}
The matrix measure $\GW_\g \cdot \bdone$ is the unique minimum of $\mathcal I^{\GW}$.
 
 This allows to 
 obtain the following analog of Theorem \ref{corGW}.
\begin{theorem}
\label{6.5}The sequence $(\mubold_\w\sn)_{n \geq r}$ satisfies an LDP at speed $n$ and good rate function
\begin{align}
\lb{rateGWM}
\mathcal I^W (\mubold) = \mathcal I^{GW}(\mubold) 
 -\g \Re\tr\!\ \left(\mbold_1(\mu) (\Lambda^* - \bdone)\right) \,.
\end{align}
\end{theorem}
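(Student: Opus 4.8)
The plan is to run the \emph{direct proof} of Theorem~\ref{corGW}(1) in the matricial setting: the law of $\mubold_\w\sn$ under $\mathbb P_\Lambda\sn$ is an exponential tilt of its law under $\mathbb P_{\bdone}\sn$, the latter satisfies the LDP with the good rate function $\mathcal I^{\GW}$, and Varadhan's lemma transfers the LDP to the tilted law. Indeed, the computation carried out just before the statement gives
\[
\mathbb P_{\Lambda}\sn (\mubold_\w\sn \in d\mubold) = \exp\big( n\,\Psi(\mubold)\big)\, \mathbb P_{\bdone}\sn (\mubold_\w\sn \in d\mubold),
\]
with $\Psi(\mubold) := \g\,\Re\tr(\mbold_1(\mu)(\Lambda^* - \bdone))$; here the identification $W_r^\uparrow = \mbold_1(\mubold_\w\sn)$ is simply the definition of the matricial spectral measure at $k=1$, since the $r\times r$ top-left corner of $W$ is $((W)_{ij})_{1\le i,j\le r}$.

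First I would observe that the tilt above is literally the Radon--Nikodym derivative $d\mathbb P_\Lambda\sn/d\mathbb P_{\bdone}\sn$ (both are probability measures, $\mathbb P_\Lambda\sn$ being the image of $\mathbb P_{\bdone}\sn$ under $U\mapsto UQ$), so that $\mathbb E_{\bdone}[\exp n\,\Psi(\mubold_\w\sn)]=1$, hence $\tfrac{1}{n}\log\mathbb E_{\bdone}[\exp n\,\Psi(\mubold_\w\sn)]=0$ and, exactly as in the scalar case, no additive constant enters the rate function. Next I would check the hypotheses of Varadhan's lemma (see \cite{demboz98} Th.~4.3.1): because $\mathbb T$ is compact, the first moment map $\mubold\mapsto\mbold_1(\mu)=\int_{\mathbb T}e^{\ii\theta}\,\mathrm d\mubold(\theta)$ is continuous for the weak topology on $\mathcal M_{r,1}(\mathbb T)$, so $\Psi$ is continuous, and $\|\mbold_1(\mu)\|\le\mubold(\mathbb T)=\bdone$ forces $\Psi$ to be bounded, so the uniform exponential integrability requirement holds trivially. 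Feeding the known LDP for $(\mubold_\w\sn)$ under $\mathbb P_{\bdone}\sn$ --- with good rate function $\mathcal I^{\GW}(\mubold)=\mathcal K(\GW_\g\cdot\bdone\,|\ \mubold)$, \cite{GNROPUC} --- into Varadhan's lemma then yields the LDP under $\mathbb P_\Lambda\sn$ with rate function $\mathcal I^{\GW}(\mubold)-\Psi(\mubold)$, which is precisely \eqref{rateGWM}; goodness survives because $\mathcal I^{\GW}-\Psi$ is lower semicontinuous and $\{\mathcal I^{\GW}-\Psi\le a\}\subseteq\{\mathcal I^{\GW}\le a+\|\Psi\|_\infty\}$ is compact.

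I do not expect a real obstacle here, the tilt having already been computed; the only point deserving a line of care is the matching of topologies --- one must make sure the base LDP of \cite{GNROPUC} for matrix probability measures on $\mathbb T$ is stated in, or can be restricted to, a topology for which $\mbold_1$ is continuous, and on the compact $\mathbb T$ the weak topology does the job. If one prefers a coefficient--side argument in the spirit of the \emph{alternative proof} of Theorem~\ref{corGW}, one starts instead from the LDP for the matricial Verblunsky coefficients under $\mathbb P_{\bdone}\sn$, uses that $\tau_\Lambda$ acts on them by the affine map $\abold_k\mapsto\Lambda^*\abold_k$ (see \eqref{changealphaM}), pushes the coefficient--side rate function forward through this change together with $\abold_0=\mbold_1^*$, and invokes the matricial Szeg\H{o}--type sum rule to recover \eqref{rateGWM}.
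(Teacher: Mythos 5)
Your proof is correct and follows the same route as the paper: compute the exponential tilt relating $\mathbb P_\Lambda\sn$ to $\mathbb P_\bdone\sn$, invoke the base LDP of \cite{GNROPUC} under $\mathbb P_\bdone\sn$, and transfer it by Varadhan's lemma. You merely make explicit the points the paper leaves tacit --- that $W_r^\uparrow=\mbold_1(\mubold_\w\sn)$ follows directly from the definition of the matricial spectral measure, that boundedness and weak continuity of $\Psi$ on the compact $\mathbb T$ dispense with any tail estimate, and that the vanishing normalization constant (since $\mathbb E_\bdone[e^{n\Psi}]=1$) explains why no additive constant appears --- and your closing sketch of the coefficient-side route via \eqref{changealphaM} and the matrix Szeg\H{o} sum rule matches the paper's remark after the theorem.
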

There is a matrix version of the method to recover the measure (Prop. 3.16 in \cite{damanik2008analytic} and Lemma 7.1 in \cite{bolotnikov2006boundary}). 
From (\ref{changealphaM}), it is then straightforward to state that if $d\mubold(\theta) = w(\theta)\cdot\bdone \ d\lambda_0(\theta) + d\mubold_s(\theta)$, then $\tau_\Lambda(\mubold\cdot \bdone)$ has for density 
\begin{align}
w^\Lambda(\theta) = 4 w(\theta) \left|\bdone + \Lambda + F(\theta) (\bdone - \Lambda)\right|^{-2}
\end{align}
where $|A|^2 = AA^*$ 
(analog of (\ref{5.29}). Notice that if $|\g|\leq 1$  there is no extra mass.

\noindent From (\ref{changealphaM}) we have
\begin{align}
\mathbb P_\Lambda\sn (\mubold_\w\sn \in d\mubold) = \mathbb P_\bdone\sn \left(\mubold_\w\sn \in d(\tau_{\Lambda^*} \mubold)\right)\,.
\end{align}
Under $\mathbb G\mathbb W_{\g}\sn$, the rate function for the LDP  is $\mathcal K(\GW_{\g}\cdot \bdone \, |\ \mubold)$.
A pushforward of this LDP gives 
\begin{align}
\label{5.25M}
\mathcal I^W(\mubold) = \mathcal K(\GW_{\g}\cdot \bdone\, |\ \tau_\Lambda\mubold)\,.
\end{align}
It is then clear that $\mathcal I^W$ reaches his unique minimum at $\tau_{\Lambda*}(\GW_\g \cdot \bdone)$.

\begin{remark}

We don't give an alternatative proof of the LDP. Actually we could have used 
the matrix version of the sum rule
 (\ref{sumrulegwg}) proved recently by analytic methods in \cite{ARH}: 
 \begin{align}
\label{mainfou}
\mathcal K(\GW_{-\g}\cdot \bdone \, | \  \mubold) = r \mathcal K(\GW_{-\g}\cdot \bdone \, | \   \lambda_0) 
+ \Re \tr (\abold_0) +\frac{\g}{2}\tr \abold_0\abold_0^\dagger + \sum_0^\infty T_\g (\abold_k)\\ + \frac{\g}{2}\sum_0^\infty \tr (\abold_{k+1} - \abold_k)(\abold_{k+1} - \abold_k)^\dagger
\end{align}
where
\[T_\g (\abold) = - \log\det (\bdone - \abold\abold^\dagger) -\g \tr\abold\abold^\dagger
\,.\]
 Replacing $\abold_k$ by $\abold_k e^{i \varphi}$ allows to recover :
\begin{align}
\mathcal K(\GW_{-\g}\cdot \bdone \, | \  \tau_{e^{-i\varphi}}\mubold) = \mathcal K(\GW_{-\g}\cdot \bdone \, | \  \mubold) + \Re \tr\!\ \left(\abold_0( e^{i \varphi}-1\right)\,.
\end{align}
\end{remark}

\section{Appendix}
We use the affine transformation $T_{\alpha, \beta}$ corresponding to the change of variable $x = \alpha y + \beta$. 

\subsection{A technical result}
The  first lemma is elementary. We give its proof for the sake of completeness.
\begin{lemma}
\label{affine}
If
\begin{align}\jac(\mu) 
= \begin{pmatrix}
b_1, & b_2,& \cdots\\
a_1, &a_2, &\cdots
\end{pmatrix}
\end{align}
then
\begin{align}\jac\left(T_{r,s}(\mu)\right) 
= \begin{pmatrix}
\tilde b_1, & \tilde b_2,& \cdots\\
\tilde a_1, &\tilde a_2, &\cdots
\end{pmatrix}\ \hbox{with} \ \ \tilde a_k = \frac{a_k}{|r|} \ , \ \tilde b_k = \frac{b_k -s}{r}\,.
\end{align}
\end{lemma}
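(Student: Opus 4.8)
The plan is to use the one-to-one correspondence between bounded measures and their Jacobi parameters recalled in Section~\ref{sec:recapOPRLOPUC}, together with the orthonormal polynomials. First I would note that $T_{r,s}(\mu)$ is the pushforward of $\mu$ under the affine bijection $x \mapsto (x-s)/r$; in particular it is again a bounded measure whose support is not a finite union of points (or is a finite union of the same number of points), so it does admit a Jacobi array, and it only remains to identify its entries.

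Let $(p_n)_{n \ge 0}$ be the orthonormal polynomials of $\mu$ with positive leading coefficients, and set $\varepsilon := \sign(r)$. A change of variable shows that the polynomials $y \mapsto p_n(ry+s)$ are orthonormal with respect to $T_{r,s}(\mu)$; since their leading coefficient in $y$ has sign $\varepsilon^n$, the orthonormal polynomials of $T_{r,s}(\mu)$ with positive leading coefficients are $\tilde p_n(y) := \varepsilon^n p_n(ry+s)$. The main step is then to substitute $x = ry+s$ in the three-term recursion~\eqref{1.1} for $(p_n)$, replace $p_n(ry+s)$ by $\varepsilon^n \tilde p_n(y)$, and divide by $\varepsilon^n r$. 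Using $\varepsilon^{-1} = \varepsilon$ and $r = \varepsilon |r|$, this turns~\eqref{1.1} into
\[ y\,\tilde p_n(y) = \frac{a_{n+1}}{|r|}\,\tilde p_{n+1}(y) + \frac{b_{n+1}-s}{r}\,\tilde p_n(y) + \frac{a_n}{|r|}\,\tilde p_{n-1}(y), \]
so by uniqueness of the Jacobi parameters one reads off $\tilde a_k = a_k/|r|$ and $\tilde b_k = (b_k-s)/r$.

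An equivalent, and perhaps cleaner, route is to argue at the level of the Jacobi matrix $J$ of~\eqref{1.3a}. Since $\mu$ is the spectral measure of the pair $(J,e_1)$, the measure $T_{r,s}(\mu)$ is the spectral measure of $(\frac1r(J-sI),e_1)$. This matrix is tridiagonal with diagonal entries $(b_k-s)/r$ and off-diagonal entries $a_k/r$, the latter being negative when $r<0$; conjugating by the self-inverse diagonal unitary $D := \diag(1,\varepsilon,\varepsilon^2,\dots)$ replaces $a_k/r$ by $\varepsilon a_k/r = a_k/|r|>0$ and leaves the diagonal unchanged, while $De_1 = e_1$ guarantees that the spectral measure with respect to $e_1$ is unaffected. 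Reading off the entries of $D\,\frac1r(J-sI)\,D$ then gives the claim.

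The only point that genuinely needs attention is the sign bookkeeping when $r<0$, which is exactly what produces $|r|$ rather than $r$ in $\tilde a_k$; everything else is routine.
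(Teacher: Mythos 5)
Your second route is essentially the paper's argument: the authors also observe that $T_{r,s}(\mu)$ is the spectral measure of $(r^{-1}(J-s),e_1)$ (via the resolvent identity $\int dT_{r,s}(\mu)(y)/(y-z) = \langle e_1,(r^{-1}(J-s)-z)^{-1}e_1\rangle$), then restore positivity of the off-diagonal when $r<0$ by the sign-alternating diagonal change of basis $e_k\mapsto(-1)^{k-1}e_k$, which is exactly your $D=\diag(1,\varepsilon,\varepsilon^2,\dots)$. Your first route, through the orthonormal polynomials $\tilde p_n(y)=\varepsilon^n p_n(ry+s)$ and the three-term recursion \eqref{1.1}, is a correct alternative not used in the paper; both are routine once the sign normalization is tracked, and your write-up handles general $r<0$ in one stroke rather than reducing to the $r=-1,s=0$ special case as the paper does.
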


\proof If $J$ be the Jacobi matrix associated with $\mu$
\[\langle e , (J-z)^{-1}e\rangle = \int \frac{d\mu(x)}{x-z}\]
hence
\begin{align*}\int \frac{dT_{r,s}(\mu)(y)}{y-z}= \int \frac{d\mu(x)}{r^{-1}(x-s)-z} 
= \langle e ,  (r^{-1}(J-s) -z)^{-1}\rangle \end{align*}
hence if $r >0$ the Jacobi matrix associated to $T_{r,s}(\mu)$ is $\tilde J = r^{-1}(J-s)$. 

If $r =-1, s=0$, the tridiagonal operator $-J$ admits $T_{-1,0}$ as its spectral measure, but $-J$ is not Jacobi. A change of basis $e_k \mapsto \tilde e_k = (-1)^{k-1}e_k$ gives the true Jacobi with $\tilde b_k = \langle \tilde e_k,  (-J)\tilde e_k\rangle = -b_k$ and $\tilde a_k = \langle \tilde e_{k+1}, (-J)\tilde e_k\rangle = a_k$.

\subsection{Free Meixner distributions}
\label{freeM}
From \cite{anshelevich2011bochner},  we know\footnote{Be careful, the author  considered the sequence $\{a_n^2 , b_n\}$ as Jacobi coefficients.} that the normalized free Meixner distributions $\mu_{b,c}$  are probability
measures on $\mathbb R$ with Jacobi parameter sequences
\begin{align}
\label{JacFreeM}\jac(\mu_{b,c}) 
= \begin{pmatrix}
0, & b,& b,& \cdots\\
1, & \sqrt{1+c},& \sqrt{1+c}, &\cdots
\end{pmatrix}
\end{align}
$b \in \mathbb R, c > -1$.
The first line corresponds to the $b$'s (diagonal terms) and the second to the $a$'s (subdiagonal terms).
The corresponding probability measure is 
\begin{align}\label{gen}\mu_{b,c}(dx) := \frac{1}{2\pi}\cdot \frac{\sqrt{4(1+c) - (x-b)^2}}{1+bx + cx^2} dx + p_1 \delta_{x_1} + p_2\delta_{x_2}\,,\end{align}
where $x_1$ and $x_2$ are real roots of $1+bx+cx^2 = 0$ (if there exist(s)) and $p_1, p_2 \in [0,1)$.
The mean is $0$ and the variance is $1$. 

The case $b=c=0$ and $p_1=p_2=0$ is just SC also called {\bf "free Gaussian"}.

\noindent In order to compare $\mu_{b,c}$ with  SC, we transform the support into $[-2,2]$ and set
\begin{align}
\label{geny}
\tilde\mu_{b,c} (dy) :=T_{\sqrt{1+c}, b}(\mu_{b,c})(dy) := \frac{1}{2\pi}\cdot \frac{\sqrt{4-y^2}}{cy^2 + \alpha y + \beta}dy + p_1 \delta_{y_1} + p_2\delta_{y_2}
\end{align}
with 
\begin{align}
\label{117}
\jac\left(\tilde\mu_{b,c}
\right)
 = \begin{pmatrix} -b/\sqrt{1+c},&0,&0,&\cdots\\
1/\sqrt{1+c},&1,&1,&\cdots
\end{pmatrix}\,.
\end{align}

Apart from SC  there are only 5 situations.

\begin{enumerate}
\item $c=0$, 
 ($b\not= 0$).
\begin{align}\mu_{b,0}(dx) &=  \frac{1}{2\pi}\cdot \frac{\sqrt{4 - (x-b)^2}}{1+bx } + (1 - b^{-2})^+ \delta_{-b^{-1}}\\
\label{MP}
T_{1,b}(\mu_{b, 0}) (dy) &=\frac{1}{2\pi} \frac{\sqrt{4-y^2}}{(1+b^2) + by} dy + (1 - b^{-2})^+\!\  \delta_{-b -b^{-1}}\,.\end{align}
It is a variant of $\MP$, called also {\bf "free Poisson"}. Indeed, 
\[T_{b,1}(\mu_{b,0})(dy) =  \frac{1}{2\pi b^2}\cdot \frac{\sqrt{((1+b)^2 -y)(y - (1-b)^2)}}{y} dy + (1 - b^{-2})^+ \delta_0\,\]

\item $c\not = 0$
\begin{enumerate}
\item $-1 < c <0$,   it is called {\bf "free binomial"}, the denominator has two real roots. For instance, when $b=0$ we get the measure
\begin{align}
\mu_{0,c} (dx) =  \frac{1}{2\pi}\cdot \frac{\sqrt{4(1+c) -x^2}}{1 + cx^2} dx + p \left(\delta_{-1/\sqrt{(-c)}} +  \delta_{1/\sqrt{(-c)}} \right)\,,
\end{align}
with $ p =\left(1 + \frac{1}{2c}\right)^+$, 
\begin{align}\notag T_{\sqrt{1+c}, 0}(\mu_{0,c})(dy)&= \frac{1}{2\pi}\cdot \frac{\sqrt{4 -y^2}}{(1 +c)^{-1} + cy^2} dy \\&+ p\left(\delta_{-1/\sqrt{-c(1+c)}} + \delta_{1/\sqrt{-c(1+c)}}\right)\,. \end{align}

Notice that  the variance is $\sigma^2 = 1/(1+c) > 1$. There are  masses if and only if $c \in (-1, -1/2)$. 

Up to an affine transform, this distribution is of the KMK type. 
 In other words it is the equilibrium measure when the potential is $-n \kappa_2 \log x -n \kappa_1 \log (1-x)$ (see Appendix)
\item $c >0, b^2 -4c <0$ , for instance with $b=0$. We get
\begin{align}
\mu_{0,c} (dx) = \frac{1}{2\pi}\cdot \frac{\sqrt{4(1+c) -x^2}}{1 + cx^2} dx  
\end{align}
(without any atoms). 
It is called {\bf"free hyperbolic tangent"}  or {\bf"free Meixner type"}, and
\[T_{\sqrt{1+c}, 0}(\mu_{0,c})(dy) = \frac{1}{2\pi}\cdot \frac{\sqrt{4 -y^2}}{(1 +c)^{-1} + cy^2} dy\,.\]
Notice that the variance is $\sigma^2 = 1/(1+c) <1$.
Up to a scaling, this distribution can be obtained  by Cayley transform from the Hua-Pickrell distribution. In other words it is the equilibrium measure when the potential is  $n \log (1+x^2)$ (see \cite{GNROPUC}). 

\item  $b^2 = 4c$, one double root $x = -2/b$, the measure is
\[\mu_{b, b^2/4}(dx) = \frac{1}{2\pi}\cdot \frac{\sqrt{4 + 2bx -x^2}}{\left(1 + \frac{bx}{2}\right)^2} dx\,.\]
It is sometimes called {\bf "free Gamma type"} and

\begin{align}
\label{freeG}
T_{\sqrt{1+ \frac{b^2}{4}}, b}(\mu_{b, b^2/4})(dy) = \frac{1}{2\pi}\cdot \frac{\sqrt{4-y^2}}{\left(\frac{b}{2}y + \frac{b^2+2}{\sqrt{b^2+4}}\right)^2} dy\,. 
\end{align}
\item $c > 0, b^2-4c >0$, it is called {\bf "free Pascal"}, the denominator in (\ref{gen}) has two real roots
\[x_\pm = -\frac{b}{2c} \pm \sign b \frac{\sqrt{b^2-2c}}{2c}\]
and there is a mass $p = \left(1 - \frac{|b| - \sqrt{b^2-4c}}{2c\sqrt{b^2-4c}}\right)^+$ at $x_+$, and
\begin{align}T_{\sqrt{1+c},b}(\mu_{b,c}) (dx) =   \frac{1}{2\pi}\cdot \frac{\sqrt{4-y^2}}{c(y-y_+)(y-y_-)} + p\delta_{y_+}\,,\end{align}
where $y_+ = \frac{x_+ -b}{\sqrt{1+c}}$. 
\end{enumerate}
\end{enumerate}

\acknowledgement{A.R. thanks Fabrice Gamboa and Jan Nagel for valuable comments on this work.}

\bibliographystyle{plain}

\end{document}